\newtheorem{theorem}{Theorem}[section]
\newtheorem{example}[theorem]{Example}
\newtheorem{lemma}[theorem]{Lemma}
\newtheorem{proposition}[theorem]{Proposition}
\newtheorem{corollary}[theorem]{Corollary}
\newtheorem{definition}[theorem]{Definition}
\newtheorem{remark}[theorem]{Remark}
\newcommand{\R}{\mathbb{R}}
\newcommand{\E}{\mathbb{E}}
\newcommand{\III}{\mathcal{I}}
\newcommand{\bea}{\begin{eqnarray}}
\newcommand{\ena}{\end{eqnarray}}
\newcommand{\beas}{\begin{eqnarray*}}
\newcommand{\enas}{\end{eqnarray*}} 
\numberwithin{equation}{section}
\begin{document}

\begin{frontmatter}

  \title{Distances between nested densities and a measure of the
    impact of the prior in Bayesian statistics} 
\runtitle{Distances
    between nested densities and the impact of the prior}

  \author{\fnms{Christophe}
    \snm{Ley}\ead[label=e]{christophe.ley@ugent.be}\thanksref{t1},
    \fnms{Gesine}
    \snm{Reinert}\ead[label=e2]{reinert@stats.ox.ac.uk}\thanksref{t2}
    \and \fnms{Yvik}
    \snm{Swan}\corref{}\ead[label=e3]{yswan@ulg.ac.be}\thanksref{t3} }
  \affiliation{Ghent University,  University of Oxford \and
    Universit\'e de Li\`ege }

  \thankstext{t1}{Christophe Ley, whose affiliation during parts of this work was Universit\'e libre de Bruxelles, thanks the Fonds National de la
    Recherche Scientifique, Communaut\'e fran\c{c}aise de Belgique, for
    financial support via a Mandat de Charg\'e de Recherche FNRS.}
  \address{Christophe Ley \\   Ghent University\\  
   Department of Applied Mathematics, \\Computer Science and Statistics \\ Krijgslaan 281, S9\\ 
   9000 Ghent, Belgium \\
    \printead{e}} 

  \thankstext{t2}{Gesine Reinert acknowledges support from EPSRC grant
    EP/K032402/I as well as from the Keble College Advanced Studies
    Centre. }
  \address{Gesine Reinert \\ University of Oxford\\Department of Statistics\\1 South Parks Road\\Oxford OX1 3TG, UK\\
    \printead{e2}} 
 
  \thankstext{t3}{Yvik Swan gratefully acknowledges support from the
    IAP Research Network P7/06 of the Belgian State (Belgian Science
    Policy).}
  \address{Yvik Swan \\Universit\'e de Li\`ege\\ D\'epartement de Math\'ematique\\12 all\'ee de la d\'ecouverte\\ 
B\^at. B37 pkg 33a\\
4000 Li\`ege, Belgium \\
    \printead{e3}}


\runauthor{Ley, Reinert \and Swan}

\begin{abstract}
  In this paper we propose tight upper and lower bounds for the
  Wasserstein distance between any two {{univariate continuous
      distributions}} with probability densities $p_1$ and $p_2$
  having nested supports. These explicit bounds are expressed in terms
  of the derivative of the likelihood ratio $p_1/p_2$ as well as the
  Stein kernel $\tau_1$ of $p_1$.  The method of proof relies on a new
  variant of Stein's method which manipulates Stein operators.

  We give several applications of these bounds. Our main
  application is in Bayesian statistics : we derive explicit
  data-driven bounds on the Wasserstein distance between the posterior
  distribution based on a given prior and the no-prior posterior based
  uniquely on the sampling distribution. This is the first finite
  sample result confirming the well-known fact that with
  well-identified parameters and large sample sizes, reasonable
  choices of prior distributions will have only minor effects on
  posterior inferences if the data are benign.

\end{abstract}


\begin{keyword}
\kwd{Stein's method}
\kwd{Bayesian analysis}
\kwd{Prior distribution}
\kwd{Posterior distribution}
\end{keyword}

\end{frontmatter}


\section{Introduction}
\label{sec:introduction}
 
A key question in Bayesian analysis is the effect of the prior on the
posterior, and how this effect could be assessed. As more and more
data are collected, will the posterior distributions derived with
different priors be very similar? This question has a long history;
see for example (\cite{stein1965approximation, DF86b,DF86a}). While
asymptotic results which give conditions under which the effect of the
prior wanes as the sample size tends to infinity can be found for
example in \cite{DF86b,DF86a}, here we are interested, at \emph{fixed}
sample size, in explicit bounds on some measure of the distributional
distance between posteriors based on a given prior and the no-prior
data-only based posterior, allowing to detect at fixed sample size the
effect of the prior.

In the simple setting of prior and posterior being univariate and
continuous, the basic relation that the posterior is proportional to
the prior times the likelihood leads to the more general problem of
comparing two distributions $P_1$ and $P_2$ whose densities $p_1$ and
$p_2$ have nested supports. Letting $\mathcal{I}_1$ (resp.,
$\mathcal{I}_2$) be the support of $p_1$ (resp., $p_2$) and assuming
$\mathcal{I}_2 \subset \mathcal{I}_1$ we can write $$p_2 = \pi_0 p_1$$
for $\pi_0 = p_2/p_1$ a non-negative finite function called likelihood ratio in statistics.
To assess the distance between such distributions, we choose the
Wasserstein-1  distance 
defined as
\begin{equation}\label{dist}
 d_{\mathcal{W}}(P_1,P_2) 
 =\sup_{h\in\mathcal{H}}|\E[h(X_2)]-\E[h(X_1)]|
\end{equation}
for $\mathcal{H} = {\rm Lip}(1)$ the class of Lipschitz-1 functions,
where $X_1$ has distribution $P_1$ (resp.,
probability distribution function (pdf) $p_1$) and $X_2$ has distribution $P_2$
(resp., pdf $p_2$).
The central aim of this paper is to provide meaningful bounds on $d_{\mathcal{W}}(P_1, P_2)$ in
terms of  $\pi_0$.


Our approach to this problem relies on 
Stein's \emph{density approach} introduced in 
\cite{Stein1986, Stein2004}, as further developed in \cite{ley2014approximate, LS13b,LS13p,LS12a}. Let $P_1$ have density $p_1$
with interval support $\mathcal{I}_1$ with closure $[a_1, b_1]$ for
some $-\infty \le a_1 < b_1 \le +\infty$.  Suppose also that $P_1$ has mean 
$\mu$.  Then a notion which will be of particular importance
is the Stein kernel of  $P_1$ which is the function
$\tau_1: [a_1, b_1] \rightarrow \R$ given by
\begin{equation*}
\tau_1 (x) = \frac{1}{p_1(x)} \int_{a_1}^x ( \mu - y) p_1(y)dy.
\end{equation*}  
Our main results assume that $p_1$ and $p_2$ are absolutely continuous
densities, and that $\pi_0$ is a differentiable function satisfying

\

\noindent Assumption A
:~$ \lim_{x\rightarrow
  a_1}\pi_0(x)\int_{a_1}^x(h(y)-\E[h(X_1)])p_1(y)dy=0=\lim_{x\rightarrow
  b_1} \pi_0(x)\int_{x}^{b_1} (h(y)-\E[h(X_1)])p_1(y)dy $
for all Lipschitz-continuous functions $h$ with $|\E [h(X_1)]| < \infty$. Here $X_1 \sim P_1$. 

\

\noindent Under these assumptions we prove  the following result (Theorem \ref{maintheo}). 

\bigskip 
 {\bf{Theorem}}. {\it{ 
The Wasserstein distance between $P_1$ with pdf $ p_1$ and $P_2$ with pdf $p_2=\pi_0p_1$ satisfies the following  inequalities:
$$
  \left|  \E \left[
\pi_0'(X_1) \tau_1(X_1) \right] \right|  \le d_{\mathcal{W}} (P_1, P_2) \le   \E \left[
  \left| \pi_0'(X_1)\right|  \tau_1(X_1) \right]  
$$
where $\tau_1$ is the Stein kernel associated with $p_1$ and $X_1 \sim P_1$. 
}}

\bigskip If $P_1= \mathcal{N}(\mu, \sigma^2)$ is a normal distribution then the above
result simplifies considerably because $\tau_1(x) = \sigma^2$ is
constant, yielding
\begin{equation*}
  \sigma^2 \left| \E \left[ \pi_0'(X_1) \right] \right| \le
  d_{\mathcal{W}} (P_1, P_2) \le \sigma^2 \E \left[ \left|
      \pi_0'(X_1)\right| \right].
\end{equation*}
The Gaussian is characterized by the fact that its Stein kernel is
constant. More generally, all distributions belonging to the classical
Pearson family possess a polynomial Stein kernel (see
\cite{Stein1986}).  The problem of determining the Stein kernel is, in
general,  difficult. Even when the Stein kernel $\tau_1$ is not
available we can give the following simpler bound (Corollary~\ref{varcor}).

\bigskip
 {\bf{Corollary}}. {\it{ 
Under the same assumptions as in Theorem \ref{maintheo},
$$
\left| \E [X_1] - \E [X_2]\right| \le d_{\mathcal{W}} (P_1, P_2) \le  \| \pi_0'\|_{\infty} \mathrm{Var} [X_1]. 
$$
}}

More generally, because the Stein kernel is always positive, the upper
and lower bounds in the Theorem turn out to be the same whenever the
likelihood ratio $\pi_0$ is monotone, which is equivalent to requiring
that $P_1$ and $P_2$ are stochastically ordered in the sense of
likelihood ratios. This brings our next result (Corollary~\ref{theo2}). 

\bigskip {\bf{Corollary}}. {\it{ Let $X_1 \sim P_1$ and $X_2 \sim P_2$.  If $X_1\le_{LR}X_2$ or
    $X_2 \le_{LR}X_1$ then
\begin{align*}
  d_{\mathcal{W}}(P_1, P_2) & = |\E[X_2]- \E
                              [X_1]|  
                               = \E \left[  |\pi_0'(X_1) |
                              \tau_1(X_1) \right] 
                               = \E \left[ \left|\left( \log \pi_0(X_2) \right)'\right|
                              \tau_1(X_2) \right].
\end{align*} }}

\bigskip In case of a monotone likelihood ratio
between $P_1$ and $P_2$, the first of the above identities is easy to
derive directly from the known alternative definitions of the
Wasserstein distance (see e.g.~\cite{vallender1974calculation})
 \begin{align*}
   d_{\mathcal{W}}(P_1, P_2) = \int_{-\infty}^{\infty} \left| F_{P_1}(x) - F_{P_2}(x)  \right| dx
   = \int_0^1 \left| F_{P_1}^{-1}(u) - F_{P_2}^{-1}(u)  \right| du
 \end{align*}
 with $F_{P_1}$ and $F_{P_1}^{-1}$ (resp., $F_{P_2}$ and $F_{P_2}^{-1}$) the
 cumulative distribution function and quantile function of $P_1$
 (resp., $P_2$). 




We illustrate the effectiveness of our bounds in several  examples
at the end of Section~\ref{sec:result}, comparing e.g.~Gaussian random variables
or Azzalini's skew-symmetric densities with their symmetric
counterparts. 
In Section \ref{sec:Bayes} we treat as main application the Bayes
example wherein we {measure explicitly the effect of priors on
  posterior distributions}.  Suppose we observe data points
$x := (x_1, x_2, \ldots, x_n)$ with sampling density $f(x ; \theta)$
(proportional to the likelihood), where $\theta$ is the
one-dimensional parameter of interest.  {Let $p_0(\theta)$ be a
  certain prior distribution, possibly improper, and let $\Theta_2$ be
  the resulting posterior guess for $\theta$ perceived as a random
  variable. By Bayes' theorem, this has density
  $p_2(\theta;x)=\kappa_2(x) {f(x ; \theta)p_0(\theta)}$ with
  $\kappa_2(x)$ the normalizing constant which depends on the
  data. Under moderate assumptions, we provide computable expressions
  for the Wasserstein distance
  $ d_{\mathcal{W}}(\Theta_2,{\Theta}_1) $ between this posterior
  distribution and $\Theta_1$, whose law is the no-prior posterior
  distribution with density (proportional to the likelihood) given by
  $p_1(\theta; x) = \kappa_1(x) f(x;\theta)$, again with normalizing
  constant $\kappa_1(x)$ depending on the data.  The bounds we derive
  are expressed in terms of the data, the prior and the Stein kernel
  $\tau_1$ of the sampling distribution.}

We study the normal model with general and normal priors, the binomial
model under a general prior, a conjugate prior, and the Jeffreys' prior. We also consider the Poisson model with an
exponential prior, in which case we can make use of the likelihood
ratio ordering.
For example, with a normal
${\cal{N}} (\mu, \delta^2)$ prior and a random sample
$x_1, \ldots, x_n$ from a normal ${\cal{N}} (\theta, \sigma^2)$ model with fixed $\sigma^2$,
we obtain in \eqref{eq:21} that
\begin{equation*}
  \frac{\sigma^2}{n\delta^2 + \sigma^2}  \left|  \bar{x}- \mu \right|\le
  d_{\mathcal{W}} (\Theta_1, \Theta_2)
  \le  \frac{\sigma^2}{n\delta^2 +
    \sigma^2}  \left|  \bar{x}- \mu \right| + \frac{\sqrt{2}}{\sqrt{\pi}} \frac{\sigma^3}{n \delta
    \sqrt{\delta^2 n + \sigma^2}}.
\end{equation*}
Not only do we see that for $n\rightarrow\infty$, the distance becomes
zero, as is well known, but we also have an explicit dependence on the
difference between the sample mean $\bar{x}$ and the prior mean $\mu$, indicating the importance of a reasonable choice for the prior.
For a normal ${\cal{N}} (\theta, \sigma^2)$ model and a general prior on $\theta$, we obtain in \eqref{normalverygenprior}  that
\begin{eqnarray*} 
  \frac{\sigma^2}{n} |\E[\rho_0(\Theta_2)]|
  \leq d_\mathcal{W}(\Theta_1,\Theta_2)    \le \frac{\sigma^2}{n}\E[|\rho_0(\Theta_2)|]
\end{eqnarray*}
with $\rho_0$ the score function of the prior distribution.  Here the
data are hidden in the distribution of $\Theta_2$. 
In the binomial case with conjugate prior we obtain
\begin{align*}
& \frac{1}{n+2}\left|(2-\alpha-\beta)\frac{\frac{\alpha}{n}+\bar{x} }{1 + \frac{\alpha+\beta}{n}}+(\alpha-1)\right|
  \le d_{\mathcal{W}}(\Theta_1, \Theta_2)  \\
& \quad \qquad \qquad \qquad \le \frac{1}{n+2}\left(  |2-\beta-\alpha| \frac{\frac{\alpha}{n}+\bar{x} }{1 + \frac{\alpha+\beta}{n}}
  + |\alpha-1| \right),
\end{align*}
with $\alpha$ and $\beta$ the parameters of the conjugate (beta) prior. Finally in the Poisson case we obtain
\begin{eqnarray*}
  d_{\mathcal{W}}(\Theta_1, \Theta_2) &=& \frac{\lambda}{n + \lambda} \bar{x} + \frac{\lambda}{n(n+\lambda)}.
\end{eqnarray*}
with $\lambda>0$ the parameter of the exponential prior.

\medskip {{ The main tool in this paper is a specification of the
    general approach in \cite{ley2014approximate} which allows to
    manipulate Stein operators.  Distributions can be compared through
    their Stein operators which are far from being
    unique; for a single distribution there is a whole family of
    operators which could serve as Stein operators, see for example
    \cite{ley2014approximate}. In this paper, for probability
    distribution $P$ with pdf $p$ we choose the Stein operator
    $\mathcal{T}_P$ as
 $$
    \mathcal{T}_P :   f \mapsto \mathcal{T}_Pf = \frac{(fp)'}{p}
    $$
  with the convention that $ \mathcal{T}_Pf(x) = 0$ outside of the support of $P$; for details 
see Definition \ref{steinpair} and \cite{LS12a}.  For this choice of operator,   the product
  structure  implies a convenient connection between
  $\mathcal{T}_1$, the Stein operator for $P_1$ with pdf $p_1$, and $\mathcal{T}_2$, the Stein operator for $P_2$ with pdf $p_2 = \pi_0 p_1$, namely
\begin{equation*}
   \mathcal{T}_2(f)=\mathcal{T}_1(f)+ f\frac{\pi_0'}{\pi_0} =\mathcal{T}_1(f)+ f(\log \pi_0)' ;  
\end{equation*}
see \eqref{fund}. The difference
$$  \mathcal{T}_2(f)- \mathcal{T}_1(f)=  f(\log \pi_0)' $$
is the cornerstone of our results. }}

\begin{remark}
  This paper restricts attention to the univariate case. The
  multivariate case is of considerable interest but our approach
  requires an extension of the density method to a multivariate
  setting, which is to date still under construction and not yet
  available.
  
  {{
  Using the approach in \cite{ley2014approximate} it would be possible to extend our results to more general Radon-Nikodym derivatives, at the expense of clarity of exposition. }}
\end{remark}

The paper is organized as follows. In Section~\ref{sec:2}, we provide
the necessary notations and definitions from Stein's method, which
allows us to state our main result, Theorem~\ref{maintheo}, in
Section~\ref{sec:result}. Several applications of this result are
discussed in Examples~\ref{sec:dist-betw-prod-1} to \ref{sec:exo4}, while
Section~\ref{sec:Bayes} tackles our motivating Bayesian problem by
providing a measure of the impact of the choice of the prior on the
posterior distribution for finite sample size $n$. Finally in
Section~\ref{sec:stein-factors-1} we provide a proof of one of the
crucial bounds we need for our estimation purposes.

\section{A review of Stein's density approach}\label{sec:2}

\subsection{Notations and definitions}\label{sec:setting}

Here we recall some notions from \cite{ley2014approximate} and
\cite{LS12a}.  Consider a {{probability distribution $P$}}  with  continuous
univariate {{Lebesgue}} density $p$ on the real line and let $L^1(p) {{= L^1(p(x) dx)}}$ denote the
collection of $f : \R\to \R$ such that
$\E |f(X)| = \int |f(x)|p(x)dx<\infty$, where $X \sim P$. Let
$\mathcal{I} = \left\{ x \in \R \, | \, p(x)>0 \right\}$ be the
support of $p$. {{In this paper we shall use the following definition of a Stein operator; see for example \cite{ley2014approximate} for a discussion of alternative choices.}}

\begin{definition}\label{steinpair}[Stein pair]
  The Stein class $ \mathcal{F}(P)$ of $P$ is the collection of
  $f:\R\to \R$ such that (i) $fp$ is absolutely continuous, (ii)
  $(fp)'\in L^1(dx)$ and (iii) $\int_{\R}(fp)'dx = 0$. The Stein
  operator $\mathcal{T}_P$ for $P$ is
  \begin{equation}
    \label{eq:12}
    \mathcal{T}_P : \mathcal{F}(P) \to L^1(p) :   f \mapsto \mathcal{T}_Pf = \frac{(fp)'}{p}
  \end{equation}
  with the convention that $ \mathcal{T}_P f(x) = 0$ outside of
  $\mathcal{I}$.   
\end{definition}
{{Here $(fp)'$ denotes the derivative of $fp$ which exists Lebesgue-almost surely due to the assumption of absolute continuity. Often the Stein pair $(\mathcal{F}(P),\mathcal{T}_P)$ is written as dependent on $X\sim P$ rather than on $P$ (that is, as $(\mathcal{F}(X),\mathcal{T}_X)$); we use the dependence on the distribution to emphasize that the pair itself is not random.}}

Note that because we only consider $f$ multiplied by $p$ the behavior
of $f$ outside of $\mathcal{I}$ is irrelevant. 

\begin{remark} \label{constant} A sufficient condition
for $\mathcal{F}(P)\neq \emptyset$ is that $p'$ is integrable with
integral $0$ so that e.g.~$f=1 \in \mathcal{F}(P)$.  Such an
assumption is in general too strong (see e.g.~\cite{Stein2004} for a
discussion about the arcsine distribution) and weaker assumptions on
$p$ are permitted in our framework, although in such cases stronger
constraints on the functions in $\mathcal{F}(P)$ are necessary.  {{In particular the constant functions may not belong to  $\mathcal{F}(P)$.}} 

All
random quantities appearing in the sequel will be assumed to have
non-empty Stein class (an assumption verified for all classical
distributions from the literature).
\end{remark}

It is easy to see {{from Definition \ref{steinpair} (iii)}} that $\E[ \mathcal{T}_Pf(X)]= 0$ for all
$f \in \mathcal{F}(P)$. More generally one can prove that if $Y$ and
$X$ share the same support then $ Y\stackrel{\mathcal{D}}{=} X$ (equality in distribution) if
and only if $\E \left[ \mathcal{T}_Pf(Y) \right]=0$ for all
$f \in \mathcal{F}(P)$.  For any family of operators ${\mathcal{T}}$ indexed by univariate probability measures $P$ and $Q$ and for any class of functions $\mathcal{G}$  we say that $(\mathcal{T}_P, \mathcal{G}) $ is a 
\emph{Stein characterization} if 
\bea \label{steinpaireq}
 P = Q     \Longleftrightarrow \mathcal{T}_Q (f) =  \mathcal{T}_P (f) \quad \forall f \in \mathcal{G};
 \ena 
see \cite{LS12a,LS13b} for  general versions.  In particular  a Stein pair $(\mathcal{T}_P, \mathcal{F}(P))$   is a Stein characterization. 

With our notations, the operator $\mathcal{T}_P$ also admits an
inverse which is easy to write out formally at least. 
 Let $X \sim P$ have  (open, closed, or
 half-open) interval  support ${\cal{I}}$ between $a$ and $b$, where
 $-\infty \le a < b \le+ \infty$ and 
 $$\mathcal{F}^{(0)}(P)= \{ h \in L^1(p): \E[h(X)]=0 \}.$$ 
Define $\mathcal{T}_P^{-1} :
 \mathcal{F}^{(0)}(P) \to \mathcal{F}(P)$ by 
\begin{equation}
  \label{eq:1}
  \mathcal{T}_P^{-1}h(x) = \frac{1}{p(x)} \int_{a}^x h(y) p(y)dy=-\frac{1}{p(x)} \int_x^{b} h(y) p(y)dy.
\end{equation}  
The operator $\mathcal{T}_P^{-1}$ is the \emph{inverse Stein
  operator} of $P$  in the sense that 
  $$ \mathcal{T}_P ( \mathcal{T}_P^{-1}h) = h.$$  
Note how the particular structure of the r.h.s.~of \eqref{eq:1} ensures
that $\mathcal{T}_P^{-1}h$  belongs to $\mathcal{F}(P)$ for any $h \in
\mathcal{F}^{(0)}(P)$. 
If in addition 
$(fp)(a) = (fp)(b) = 0$ for all $f \in \mathcal{F}(p)$ then 
  $$ \mathcal{T}_P^{-1} ( \mathcal{T}_Pf) = f$$ 
so that $\mathcal{T}_P^{-1}$ constitutes a bona fide inverse in this
case.

\subsection{Standardizations of the operator}
\label{sec:stand-oper}
Although the Stein pair $(\mathcal{T}_P, \mathcal{F}(P))$ is unique{{ly defined in Definition \ref{steinpair},}} 
there are many implicit conditions on $f \in \mathcal{F}(P)$ which are
useful to identify before applying this construction to specific
approximation problems. {{In particular for favourable behavior of the inverse Stein operator it may be advantageous to consider}} only subclasses
$\mathcal{F}_{\mathrm{sub}}(P)\subset \mathcal{F}(P)$ of functions
satisfying certain target-specific and well chosen constraints. A good
choice of subclass will lead to specific forms of the resulting
operator which may turn out {{to have a smooth inverse Stein operator, as illustrated in the next example. As long as $\mathcal{F}_{\mathrm{sub}}(P)$ is a measure-determining class, the class is informative enough to satisfy \eqref{steinpaireq}. }}

\begin{example}
  In the case of the Laplace distribution $\mathrm{Lap}$ with pdf $p(x) \propto e^{-|x|}$  {{the Stein operator from Definition \ref{steinpair}}} is 
  \begin{equation}\label{eq:18}
    \mathcal{T}_{\mathrm{Lap}}f(x) = f'(x) - \mathrm{sign}(x)f(x)
  \end{equation}
  with $f \in \mathcal{F}(\mathrm{Lap})$, {{the class of}} functions such that $f(x) e^{-|x|}$
  is differentiable {{almost surely}} with integrable derivative{{, and the derivative of $f(x) e^{-|x|}$ integrates to 0 over the real line}}. This
  operator does not have agreeable properties, mainly because the
  assumptions on $\mathcal{F}(\mathrm{Lap})$ are not explicit (see
  e.g.~\cite{eichelsbacher2014malliavin} and \cite{pikeren}). 
  It is {{indeed}} sufficient to consider functions of the form
  $f(x) = (x f_0(x)e^{|x|})'/e^{|x|}$ for certain functions
  $f_0$. Applying $\mathcal{T}_{\mathrm{Lap}}$ to such functions
  yields the second order operator
\begin{equation}\label{eq:25}
  \mathcal{T}_{\mathrm{Lap}}f(x) = \mathcal{A}_Xf_0(x) =    xf_0''(x)+2f_0'(x)-x f_0(x)
\end{equation}
with $f_0 \in \mathcal{F}(\mathcal{A}_\mathrm{Lap})$ the class of 
functions {{which are piecewise twice continuously differentiable such that $x f_0''(x), f_0'(x)$ and $x f_0(x)$ are all in $L^1 \left( e^{-|x|} dx \right) $}}, as considered e.g.~in
\cite{eichelsbacher2014malliavin,gaunt2014variance}. {{ In \cite{pikeren} functions of the form $f(x) = (-(g(x) - g(0)) e^{|x|})'/e^{|x|}$  yielded the second order operator 
$$  \mathcal{T}_{\mathrm{Lap}, PR}g(x) = g(x) - g(0) -  g''(x)$$ 
for $g$ locally absolutely continuous with
$g \in L^1 \left( e^{-|x|} dx \right) $, $g'$ also locally absolutely
continuous and $g'' \in L^1 \left( e^{-|x|} dx \right) $.  The
operator $\mathcal{T}_{\mathrm{Lap}, PR}$ is also discussed in
\cite{eichelsbacher2014malliavin} but not used in
\cite{eichelsbacher2014malliavin} because it did not fit in with
Malliavin calculus as well as \eqref{eq:25}. }}
\end{example}

{{
Even in the straightforward situation of a normal distribution, often a standardization is applied, as explained in the next example.}}

\begin{example}
For the standard normal distribution 
${\mathcal{N}}(0,1)$ it is
easy to write out the  operator \eqref{eq:12} explicitly to get
$ \mathcal{T}_{{\mathcal{N}}(0,1)} (f)(x) = f'(x) - xf(x)$ acting on a wide class of
functions $\mathcal{F}({\mathcal{N}}(0,1))$  which includes all absolutely continuous functions with
polynomial decay at $\pm \infty$. In particular the constant function $\bf{1}$ is in $\mathcal{F}({\mathcal{N}}(0,1))$. {{A standardization of the form 
$f(x)= H_n(x) f_0(x)$ with $H_n$ the $n^{th}$ Hermite polynomial ($H_0(x) = 1, H_1(x) = x, H_2(x) = x^2 -1$) gives as  operator 
$ {\cal{A}}f_0 (x) = H_n(x) f_0'(x) - H_{n+1}(x) f_0(x),$
see for example \cite{goldrein}. 

It is also possible to study the behavior of functions $f_h$ under
quite general conditions on $h$. For instance if $\mathcal{H}$ is the set
of measurable functions $h: \R \to [0,1]$ (leading to the total
variation measure) then $\mathcal{F}^{(1)}$ is contained in the
collection of {{differentiable}} functions such that $\| f \| \le \sqrt{\pi/2}$ and
$\|f'\| \le 2$; see for instance~\cite{NP11}.

For the general normal distribution 
${\mathcal{N}}(\mu,\sigma^2)$ 
the  operator \eqref{eq:12} gives 
\bea \label{normalop}
 \mathcal{T}_{{\mathcal{N}}(\mu,\sigma^2)} (f)(x) = f'(x) - \frac{x-\mu}{\sigma^2} f(x). 
 \ena 
 The standardization $f(x) = \sigma^2 g'( x)$  yields the classical Ornstein-Uhlenbeck Stein operator 
$
 {\mathcal{A}} g (x) = \sigma^2 g''(x) - (x - \mu) g'(x),
$
 see for example~\cite{ChGoSh11}.}}
\end{example}

We call the passage from a parsimonious operator $\mathcal{T}_P$ (such
as \eqref{eq:18}) acting on the implicit class $\mathcal{F}(P)$ to a
specific operator $\mathcal{A}_P$ (such as \eqref{eq:25}) acting on a
generic class $\mathcal{F}(\mathcal{A}_P)$ a \emph{standardization} of
$(\mathcal{T}_P, \mathcal{F}(P))$. Given $P$ there are 
infinitely many different possible standardizations. 
\subsection{The Stein transfer principle}
\label{sec:steins-transf-princ}

Suppose that we aim to assess the discrepancy between the laws of two
random quantities $X$ {{with distribution $P$}} and $W$ {{with distribution $Q$}}, say, in terms of some probability
distance of the form
\begin{equation}\label{eq:2}
d_{\mathcal{H}}(P, Q)  = 
d_{\mathcal{H}}(X, W) =   \sup_{h\in \mathcal{H}} | \E[ h(W)] - \E [h(X)]|, 
\end{equation}
for $\mathcal{H}$ some measure{{-determining}} class; many common distances
can be written under the form \eqref{eq:2}, including the Kolmogorov
distance (with $\mathcal{H}$ the collection of indicators of half
lines), the Total Variation distance (with $\mathcal{H}$ the
collection of indicators of Borel sets) and the $1$-Wasserstein
distance (see \eqref{dist}). {{Here writing $d_{\mathcal{H}}(X, W) $ is a shorthand for \eqref{eq:2}: this distance is not random.}} 

Let $P$ have Stein pair $(\mathcal{T}_P, \mathcal{F}(P))$ and consider
a standardization $(\mathcal{A}_P, \mathcal{F}(\mathcal{A}_P))$ as
described in Section \ref{sec:stand-oper}. The first key idea in
Stein's method is to relate the test functions $h$ of interest to a
function $f = f_h \in \mathcal{F}(\mathcal{A}_P)$ through the
so-called {\it{Stein equation}}
\begin{equation} \label{Steinequation} 
h(x) - \E [h(X)] = \mathcal{A}_P f(x) , \quad x \in \mathcal{I},
\end{equation}
so that, for $f_h$ solving \eqref{Steinequation}, we get
$h(W) - \E [h(X)] = \mathcal{A}_P f_h(W)$ and, in particular,
\begin{equation} \label{eq:start} 
\sup_{h \in \mathcal{H}} | \E [ h(W)]- \E [h(X)]  | =
 \sup_{f \in  \mathcal{F}^{(1)}} | \E \left[ \mathcal{A}_P f(W) \right] |
\end{equation}
where
$ \mathcal{F}^{(1)} =\mathcal{F}^{(1)} (\mathcal{A}_P, \mathcal{H})= \left\{ f \in \mathcal{F}(\mathcal{A}_P) \, | \,
  \mathcal{A}_Pf = h  - \E [h(X)]  \mbox{ for some } h \in \mathcal{H} \right\}. $
The first step in Stein's method thus consists in some form of
transfer principle whereby one transforms the problem of bounding the
distance $d_{\mathcal{H}}(P, Q)$ into that of bounding the
expectations of the operators $\mathcal{A}_P$ over a specific class of
functions.

\begin{example}
For the standard normal distribution, the operators \eqref{eq:12} and \eqref{normalop} give 
$ \mathcal{T}_{{\mathcal{N}}(0,1)} (f)(x) = f'(x) - xf(x)$. 
 Bounding expressions of the form $\left|\E \left[ f'(W) - Wf(W) \right]\right|$ as occurring in the  r.h.s. of \eqref{eq:start} is a potent starting
point for Gaussian approximation problems. Prominent examples include 
 $W = \sum_i \xi_i$ a standardized sum of weakly dependent variables, and $W = F(X)$  a
functional of a Gaussian process; see e.g.~\cite{ChGoSh11,Ro11,NP11} for an overview. 
\end{example}

In general, the success of Stein's method for a particular target relies
on the positive combination of three factors : 
\begin{enumerate}[(i)]
\item \label{item:1}   the functions in
$\mathcal{F}^{(1)}$ need to have ``good'' properties (e.g.~be bounded
with bounded derivatives),
\item \label{item:2} the operator
$\mathcal{A}_P$ needs to be amenable to computations (e.g.~its
expression should only involve polynomial functions), 
\item \label{item:3} there must be some ``handle'' on the expressions
  $\E \left[ \mathcal{A}_Pf(W) \right]$ (e.g.~{{allowing for Taylor-type expansions or the application of couplings)}}.
\end{enumerate}
Conditions \eqref{item:1} to \eqref{item:3} are satisfied 
for a great variety of target distributions (including
the exponential, chi-squared, gamma, semi-circle, variance gamma and
many others, see {{for example}} 
https://sites.google.com/site/yvikswan/about-stein-s-method for an
up-to-date list). 

\subsection{The Stein kernel}
\label{sec:score-function-stein}
One of the many keys to a successful application of Stein's method for
a given target distribution {{$P$}} lies in the properties of $P$'s 
\emph{Stein kernel}. 
We now review some properties of 
this 
quantity which will play a central role in our analysis; {{see \cite{ LS12a} or \cite{ley2014approximate}
for details.}}  

\begin{definition}
Let $P$ be a probability distribution with mean $\mu$, {{and let $X \sim P$}}.  A  \emph{Stein kernel} of $P$ is a random variable $\tau_P(X)$
  such that 
  \begin{equation}
    \label{eq:steqdef}
    \E \left[ \tau_P(X) \varphi'(X) \right] = \E \left[ (X-\mu) \varphi(X) \right]
  \end{equation}
for all {{differentiable}} $\varphi: \R \to \R$ for which the expectation {{$ \E \left[ (X-\mu) \varphi(X) \right]$}}
exists.
\end{definition}

 The function 
$  x \mapsto \tau_P(x) = \E \left[ \tau_P(X) \, | \, X = x
\right]
$
 is a \emph{Stein kernel (function)} of $P$.  If $P$ has interval
 support with closure $[a, b]$ then, letting $Id$ denote
 the identity function, it is not hard to see
 that 
 \begin{equation*}
   \tau_P(x)= \mathcal{T}_P^{-1}(\mu-Id)(x) = \frac{1}{p(x)} \int_a^x(\mu -y)p(y)dy
 \end{equation*}
 is the unique Stein kernel of $P$. Moreover the following properties of the
 Stein kernel are immediate consequences of its definition:
 \bea\label{sec:score-function-stein-1}
\mbox{for all } x \in \R  \mbox{ we have that } \tau_P(x)\geq 0
 \mbox{ and }\E \left[ \tau_P(X)  \right] = \mathrm{Var}[X]. 
    \ena
    The Stein kernels  for a wide variety of
    classical distributions (all members of the Pearson family, as it
    turns out) bear agreeable expressions; see~\cite[Table 1]{EdVi12},
    \cite{nourdin2013entropy,nourdin2013integration} or the
    forthcoming \cite{DRS14} for illustrations.

\subsection{Stein factors}
\label{sec:stein-factors}

Let $P$ have a continuous density $p$  with mean $\mu$ and support
$\mathcal{I}$ such that the closure of $\mathcal{I}$ is the interval
$[a,b]$ (possibly with infinite endpoints).  Let
$(\mathcal{T}_P, \mathcal{F}(P))$ be the Stein pair of $P$ and suppose
that $P$ admits a Stein kernel $\tau_P(x)$, as described in Subsection \ref{sec:score-function-stein}. We introduce the standardized Stein pair
$(\mathcal{A}_P, \mathcal{F}(\mathcal{A}_P))$ with
\begin{equation}
  \label{eq:29}
  \mathcal{A}_Pf(x) = \mathcal{T}_P(\tau_P f)(x) = \tau_P (x) f'(x) +
  (\mu - x) f(x), \quad x \in \mathcal{I},
\end{equation}
and 
\begin{align*}
  \mathcal{F}(\mathcal{A}_P) &  = \left\{f : \R \to \R \mbox{ absolutely continuous
                               such that
                               }  \right. \\
                             & \qquad \left.  \lim_{x\to a}   f(x)
                               \int_a^{x}(\mu-u) p(u) du =  \lim_{x\to b}   f(x)
                               \int_x^{b}(\mu-u) p(u) du = 0
                               \right.\\
& \qquad \left.\mbox{and }  \left( f(x)
                               \int_a^{x}(\mu-u) p(u) du \right)' \in L^1(dx)\right\}
\end{align*}
Our next lemma shows that whenever applicable, standardization
\eqref{eq:29} satisfies    requirement \eqref{item:1} 
from the end of Section \ref{sec:steins-transf-princ}. 

\begin{lemma}\label{sec:about-constants-1}
 
  Let $\mathcal{H} = \mathrm{Lip}(1)$ be the collection of Lipschitz
  functions $h : \R \to \R$ with Lipschitz constant 1 and let
  $\mathcal{F}^{(1)}$ be the collection of
  $f \in \mathcal{F}(\mathcal{A}_P)$ such that
  $\mathcal{A}_Pf = h - \E [h(X)]$ for some $h \in \mathcal{H}$. Then
  $\mathcal{F}^{(1)}$ is contained in the collection of functions $f$
  such that $\| f \|_{\infty} \le 1$.
\end{lemma}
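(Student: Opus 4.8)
The plan is to use the Stein equation for the standardized operator $\mathcal{A}_P$ together with the explicit inverse Stein operator $\mathcal{T}_P^{-1}$. Given $h \in \mathrm{Lip}(1)$, the associated $f = f_h \in \mathcal{F}^{(1)}$ solves $\mathcal{A}_P f = h - \E[h(X)]$, i.e.\ $\mathcal{T}_P(\tau_P f) = h - \E[h(X)]$ by \eqref{eq:29}. Since $h - \E[h(X)] \in \mathcal{F}^{(0)}(P)$, applying $\mathcal{T}_P^{-1}$ from \eqref{eq:1} gives the representation
\begin{equation*}
  \tau_P(x) f(x) = \mathcal{T}_P^{-1}\big(h - \E[h(X)]\big)(x) = \frac{1}{p(x)}\int_a^x \big(h(y) - \E[h(X)]\big) p(y)\, dy,
\end{equation*}
and equally, using the second form in \eqref{eq:1}, $\tau_P(x) f(x) = -\frac{1}{p(x)}\int_x^b (h(y) - \E[h(X)]) p(y)\,dy$. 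Since $\tau_P(x) > 0$ on the interior of $\mathcal{I}$ (by \eqref{sec:score-function-stein-1}), we may divide, so the task reduces to bounding
\begin{equation*}
  |f(x)| = \frac{1}{\tau_P(x) p(x)}\left| \int_a^x \big(h(y) - \E[h(X)]\big) p(y)\, dy \right|
\end{equation*}
uniformly in $x$ and over $h \in \mathrm{Lip}(1)$.

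The key step is to exploit the freedom of choosing the constant against which we center. Because $h$ is Lipschitz-$1$, for any fixed reference point $c$ we have $|h(y) - h(c)| \le |y - c|$. Writing $h(y) - \E[h(X)] = (h(y) - h(x)) + (h(x) - \E[h(X)])$ and using both representations of $\tau_P f$, one can split the integral cleverly: the constant part $h(x) - \E[h(X)]$ integrated against $p$ from $a$ to $x$ gives $(h(x) - \E[h(X)]) F_P(x)$, while the same constant from the other representation gives $-(h(x) - \E[h(X)])(1 - F_P(x))$; averaging (or matching) these two expressions isolates the genuinely variable part. More directly, I would bound $|h(y) - \E[h(X)]| \le \E[|h(y) - h(X)|] \le \E[|y - X|]$ and then observe that this is still not tight enough on its own — so instead I center at $x$: from the first representation, $|\tau_P(x)p(x) f(x)| \le \int_a^x |h(y)-h(x)| p(y)\,dy + |h(x)-\E[h(X)]|F_P(x) \le \int_a^x (x-y) p(y)\,dy + |h(x)-\E[h(X)]|F_P(x)$, and symmetrically from the second. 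Noting $\int_a^x (x-y)p(y)\,dy = x F_P(x) - \int_a^x y\,p(y)\,dy$, and comparing with the defining identity $\tau_P(x)p(x) = \int_a^x(\mu-y)p(y)\,dy$, I would combine the two one-sided bounds so that the awkward $|h(x)-\E[h(X)]|$ terms cancel against each other (one comes with weight $F_P(x)$, the other with $1-F_P(x)$), leaving exactly $|f(x)| \le \frac{1}{\tau_P(x)p(x)}\big(F_P(x)\int_x^b(y-x)p(y)\,dy + (1-F_P(x))\int_a^x(x-y)p(y)\,dy\big)$. The final computation is to check that the bracketed quantity, after simplification using $\int_a^b(y-x)p(y)\,dy = \mu - x$ and $\int_a^x(\mu-y)p(y)\,dy = \tau_P(x)p(x)$, collapses to exactly $\tau_P(x)p(x)$, giving $\|f\|_\infty \le 1$.

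The main obstacle I anticipate is making the cancellation of the $h(x)-\E[h(X)]$ terms rigorous and ensuring the resulting algebraic identity really does yield the constant $1$ rather than some larger constant; this requires carefully tracking which of the two representations of $\tau_P f$ is used on each side of the point $x$ and verifying the weights $F_P(x)$ and $1-F_P(x)$ appear correctly. A secondary technical point is justifying that $f$ as constructed genuinely lies in $\mathcal{F}(\mathcal{A}_P)$ (the boundary and integrability conditions), but this follows from the structure of $\mathcal{T}_P^{-1}$ noted after \eqref{eq:1} together with the Lipschitz growth of $h$, so I would treat it briefly. I would also remark that an alternative, perhaps cleaner, route is to represent $f(x) = -\frac{1}{\tau_P(x)p(x)}\int_x^b (h(y)-\E[h(X)])p(y)\,dy$ and directly estimate $|h(y) - \E[h(X)]| \le \E_{X}|h(y)-h(X)| \le \E_X|y - X|$, then bound the double integral $\int_x^b \E_X|y-X| p(y)\,dy$ against $\tau_P(x)p(x)$ using monotonicity of the relevant partial expectations; whichever is shorter to write cleanly is the one I would adopt.
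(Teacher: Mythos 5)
Your proof is correct, but it follows a genuinely different route from the paper's. The paper (Lemma 6.1) exploits the almost-sure differentiability of Lipschitz $h$ (Rademacher) plus a Fubini condition: it writes $h(y)-\E[h(X)]=\int_a^y F_P(v)h'(v)\,dv-\int_y^b(1-F_P(v))h'(v)\,dv$, integrates once more against $p$, and obtains the exact representation $\int_a^x(h(y)-\E[h(X)])p(y)\,dy=-(1-F_P(x))\int_a^x F_P h'-F_P(x)\int_x^b(1-F_P)h'$, which it bounds by $\|h'\|$ times a quantity that simplifies to $\int_a^x(\mu-y)p(y)\,dy=\tau_P(x)p(x)$. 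You instead avoid differentiating $h$ altogether: you use the two equivalent forms of $\mathcal{T}_P^{-1}(h-\E[h(X)])(x)$ (integral from $a$ to $x$ versus from $x$ to $b$), center $h$ at $x$, and take the convex combination with weights $1-F_P(x)$ and $F_P(x)$ of the two exact identities so that the $(h(x)-\E[h(X)])$ terms cancel; the Lipschitz bound $|h(y)-h(x)|\le|y-x|$ then leaves $(1-F_P(x))\int_a^x(x-y)p(y)\,dy+F_P(x)\int_x^b(y-x)p(y)\,dy$, which indeed collapses to $\tau_P(x)p(x)$ (the same algebra as in the paper, since $\int_a^x F_P(s)\,ds=\int_a^x(x-y)p(y)\,dy$ and $\int_x^b(1-F_P(s))\,ds=\int_x^b(y-x)p(y)\,dy$). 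The one point you rightly flag as delicate is essential: the cancellation must be performed on the exact representations, i.e.\ form $(1-F_P(x))\cdot[\text{first identity}]+F_P(x)\cdot[\text{second identity}]$ \emph{before} taking absolute values; combining the two one-sided \emph{bounds} would make the centering terms add rather than cancel. Done this way your argument is complete and slightly more elementary (no Rademacher, no Fubini hypotheses on $h'$), at the price of being tied to the Lipschitz modulus, whereas the paper's version delivers the more flexible estimate $\|g_h\|\le\|h'\|$ for a.e.-differentiable $h$, which it reuses later (e.g.\ in the proof of Proposition 3.2). The remaining step you treat briefly — that for $f\in\mathcal{F}^{(1)}$ one has $\tau_P f=\mathcal{T}_P^{-1}(h-\E[h(X)])$ — is justified by the boundary conditions built into $\mathcal{F}(\mathcal{A}_P)$, which give $\mathcal{T}_P^{-1}(\mathcal{T}_P(\tau_P f))=\tau_P f$, so this is indeed a minor point.
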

Lemma~\ref{sec:about-constants-1} is strongly related to 
\cite[Corollary 2.16]{Do12}, adapted to our framework.  For the sake of
completeness we present a proof of (a generalization of) this result
at the end of the present paper. The key to our approach  lies in the
fact that the bound in Lemma~\ref{sec:about-constants-1} \emph{does
  not depend} on the standardization of the target $P$; it is in
particular independent of the mean and variance of $X \sim P$ or of any
normalizing constant that might appear in the expression of the
density of $P$. 


\section{Comparing univariate
  continuous densities}\label{sec:result}

For $i=1,2,$ let $P_i$ be a probability distribution with   an absolutely continuous 
density $p_i(\cdot)$ having   support  
  $\III_i$ with closure $\bar \III_i = [a_i, b_i]$,  for some
$-\infty \le a_i < b_i \le +\infty$. Suppose that
$\III_2\subset\III_1$ and define $\pi_0$ through 
   \begin{equation}
     \label{eq:27}
     p_2 = \pi_0 p_1.
   \end{equation}
   Associate with both {{distributions}}  the Stein pairs
   $(\mathcal{T}_i, \mathcal{F}_i)$ for $i=1, 2$, as well as the
   resulting construction from the previous section. 
   
   The product
  structure \eqref{eq:27} implies a {{key}} connection between
  $\mathcal{T}_1$ and $\mathcal{T}_2$, namely
\begin{equation}\label{fund}
   \mathcal{T}_2(f)=\mathcal{T}_1(f)+ f\frac{\pi_0'}{\pi_0} =\mathcal{T}_1(f)+ f(\log \pi_0)'  
\end{equation}
for all $f\in\mathcal{F}_1\cap\mathcal{F}_2$. 

\subsection{Bounds on the Wasserstein distance between univariate
  continuous densities}\label{sec:result}

   Our {{main}} 
   objective in this section is to provide computable and meaningful bounds on the Wasserstein distance
   $ d_{\mathcal{W}}(P_1, P_2)${{, defined in \eqref{dist},}} in terms of $\pi_0$ {{and $P_1$}}, under the product structure \eqref{eq:27}.

\begin{theorem}\label{maintheo}
 {{For $i=1,2,$ let $P_i$ be a probability distribution with   an absolutely continuous 
density $p_i$ having   support  
  $\III_i$ with closure $\bar \III_i = [a_i, b_i]$,  for some
$-\infty \le a_i < b_i \le +\infty$; suppose that
$\III_2\subset\III_1$  and let {{$X_i \sim P_i$ have finite means $\mu_i$ for $i=1, 2$}}. Assume that $\pi_0 = \frac{p_2}{p_1}$, defined on $\III_2$, is
  differentiable on $\III_2$, satisfies  
  $ \E |  (X_1 - \mu_1) \pi_0( X_1) | < \infty$ and  }} 
\begin{align}
  \label{eq:6}
&  \left(  \pi_0(x) \int_{a_1}^x(h(y)-\E[h(X_1)])p_1(y)dy \right)' \in
  L^1(dx)\\
& \lim_{x \to a_2, b_2} \pi_0(x) \int_{a_1}^x(h(y)-\E[h(X_1)])p_1(y)dy
  = 0 \label{eq:9}
\end{align}
for all $h \in \mathcal{H}$, the set of Lipschitz-1 functions on $\R$.  Then 
\begin{equation}
\label{eq:10}
\left|\E \left[ \pi_0'(X_1) \tau_1(X_1) \right]\right| \le d_{\mathcal{W}} (P_1, P_2) \le  
\E \left[   \left|  \pi_0'(X_1) \right| \tau_1(X_1) \right]
\end{equation}
where $\tau_1$ is the Stein kernel of $P_1$.
\end{theorem}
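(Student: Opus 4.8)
The plan is to run the Stein-method ``transfer principle'' of Section~\ref{sec:steins-transf-princ} with the target $P_1$ and its standardized operator $\mathcal{A}_1 = \mathcal{A}_{P_1}$ from \eqref{eq:29}, and then to rewrite the resulting expectation under $P_2$ using the product structure \eqref{fund}. Concretely, fix $h \in \mathcal{H} = \mathrm{Lip}(1)$ and let $f = f_h \in \mathcal{F}(\mathcal{A}_1)$ solve the Stein equation $\mathcal{A}_1 f(x) = h(x) - \E[h(X_1)]$; by \eqref{eq:1} and the definition of $\tau_1$ this solution is $f_h = \mathcal{T}_1^{-1}\big(h - \E[h(X_1)]\big)/\tau_1$, equivalently $\tau_1(x) f_h(x) = \int_{a_1}^x (h(y)-\E[h(X_1)]) p_1(y)\,dy$. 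Since $X_1$ and $X_2$ have nested supports with $\III_2 \subset \III_1$, we may evaluate $\mathcal{A}_1 f_h$ at $X_2$, so that
\begin{equation*}
\E[h(X_2)] - \E[h(X_1)] = \E\big[\mathcal{A}_1 f_h(X_2)\big] = \E\big[\mathcal{T}_1(\tau_1 f_h)(X_2)\big].
\end{equation*}
Now I would invoke \eqref{fund} with the roles adjusted: writing $g = \tau_1 f_h$, the identity $\mathcal{T}_2 g = \mathcal{T}_1 g + g\,(\log \pi_0)' = \mathcal{T}_1 g + g\,\pi_0'/\pi_0$ gives $\mathcal{T}_1 g = \mathcal{T}_2 g - g\,\pi_0'/\pi_0$. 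Taking expectations under $X_2 \sim P_2$ and using $\E[\mathcal{T}_2 g(X_2)] = 0$ (valid once $g \in \mathcal{F}_2$, which is where the integrability hypothesis \eqref{eq:6} and the vanishing-boundary hypothesis \eqref{eq:9} enter), we obtain
\begin{equation*}
\E[h(X_2)] - \E[h(X_1)] = -\,\E\!\left[ g(X_2)\,\frac{\pi_0'(X_2)}{\pi_0(X_2)} \right] = -\,\E\!\left[ \tau_1(X_2) f_h(X_2)\,\frac{\pi_0'(X_2)}{\pi_0(X_2)} \right].
\end{equation*}
Finally I would transfer this back to an expectation under $P_1$: since $p_2 = \pi_0 p_1$, for any integrable $\psi$ we have $\E[\psi(X_2)] = \E[\pi_0(X_1)\psi(X_1)]$, so the displayed quantity equals $-\,\E[\tau_1(X_1) f_h(X_1)\,\pi_0'(X_1)]$. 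Substituting $\tau_1(x) f_h(x) = \int_{a_1}^x(h(y)-\E[h(X_1)])p_1(y)\,dy$ one could also phrase the conclusion directly in terms of $h$, but the clean form is
\begin{equation*}
\big|\E[h(X_2)] - \E[h(X_1)]\big| = \big|\E[\pi_0'(X_1)\,\tau_1(X_1)\, f_h(X_1)]\big|.
\end{equation*}
For the upper bound, Lemma~\ref{sec:about-constants-1} guarantees $\|f_h\|_\infty \le 1$ uniformly over $h \in \mathcal{H}$, so $|\E[h(X_2)]-\E[h(X_1)]| \le \E[|\pi_0'(X_1)|\,\tau_1(X_1)\,|f_h(X_1)|] \le \E[|\pi_0'(X_1)|\,\tau_1(X_1)]$ using $\tau_1 \ge 0$ from \eqref{sec:score-function-stein-1}; taking the supremum over $h$ gives the right-hand inequality in \eqref{eq:10}. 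For the lower bound, I would plug in the specific Lipschitz-1 function $h(x) = x$ (equivalently $h = \pm \mathrm{Id}$), for which the Stein equation $\tau_1 f' + (\mu_1 - \mathrm{Id})f = \mathrm{Id} - \mu_1$ is solved by the constant $f \equiv -1$ (indeed $\mathcal{A}_1(-1)(x) = -( \mu_1 - x) = x - \mu_1$). Then the identity above gives $|\E[X_2] - \E[X_1]| = |\E[\pi_0'(X_1)\,\tau_1(X_1)]|$, and since $d_{\mathcal{W}}(P_1,P_2) \ge |\E[h(X_2)] - \E[h(X_1)]|$ for this one admissible $h$, the left-hand inequality in \eqref{eq:10} follows.

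The step I expect to be the main obstacle is the rigorous justification of $\E[\mathcal{T}_2 g(X_2)] = 0$, i.e. verifying that $g = \tau_1 f_h$ genuinely lies in the Stein class $\mathcal{F}_2 = \mathcal{F}(P_2)$ so that Definition~\ref{steinpair}(iii) applies. This requires checking that $g p_2 = \tau_1 f_h \,\pi_0 p_1$ is absolutely continuous, that $(g p_2)'$ is integrable, and that it integrates to zero — and $(g p_2)(x) = \pi_0(x)\int_{a_1}^x(h(y)-\E[h(X_1)])p_1(y)\,dy$, so these are exactly the hypotheses \eqref{eq:6} and \eqref{eq:9}, together with the assumption $\E|(X_1-\mu_1)\pi_0(X_1)| < \infty$ which controls the $\mathcal{T}_1 g$ term. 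I would be careful to record that the boundary limits in \eqref{eq:9} are taken at $a_2, b_2$ (the endpoints of the smaller support, where $p_2$ and hence $g p_2$ must vanish in the limit), which is precisely what makes $\int_{\R}(g p_2)'\,dx = \lim_{x\to b_2}(gp_2)(x) - \lim_{x\to a_2}(gp_2)(x) = 0$. A secondary point worth a sentence is the measurability/integrability needed to pass $\E[\psi(X_2)] = \E[\pi_0(X_1)\psi(X_1)]$ through with $\psi = \tau_1 f_h\,\pi_0'/\pi_0$; this is routine given the boundedness of $f_h$ and the stated moment assumption, but should be mentioned.
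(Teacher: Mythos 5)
Your proposal is correct and follows essentially the same route as the paper: the upper bound is the paper's argument up to relabelling (your $f_h$ solving $\mathcal{A}_1 f = h-\E[h(X_1)]$ is the paper's $g_h=\mathcal{T}_1^{-1}(h-\E[h(X_1)])/\tau_1$; membership of $\tau_1 f_h$ in $\mathcal{F}_2$ is exactly what \eqref{eq:6}--\eqref{eq:9} provide; the relation \eqref{fund} and Lemma~\ref{sec:about-constants-1} with $\|f_h\|_\infty\le 1$ finish it, followed by the same change of measure $p_2=\pi_0 p_1$). The only cosmetic difference is the lower bound, which the paper obtains directly from $\E[\pi_0(X_1)]=1$ and the Stein-kernel identity \eqref{eq:steqdef} applied to $\varphi=\pi_0$, whereas you specialize your general identity to $h=\mathrm{Id}$ with constant solution $f\equiv -1$; the two computations are equivalent.
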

\begin{proof}
  We first prove the lower bound. {{Let $X_2 \sim P_2$.}} Start by noting that
  $d_{\mathcal{W}} (P_1, P_2)  \ge |\E [X_2] - \E [X_1]|$ because
  $Id\in{\rm Lip}(1)$.  {{With \eqref{eq:27} we get}} that 
  \begin{align}
    \E[X_2] - \E[X_1] & = \E [X_1\pi_0(X_1) ] - \mu_1 \nonumber\\
    & = \E \left[ (X_1-\mu_1)\pi_0(X_1)  \right]\nonumber\\
    & = \E \left[ \tau_1(X_1) \pi_0'(X_1) \right] \label{eq:43}
  \end{align}
  where we used the fact that $\E \left[ \pi_0(X_1) \right] = 1$ and the definition \eqref{eq:steqdef} of
  $\tau_1(X_1)$ in the last line.

  Next we prove the upper bound.  By the definition \eqref{eq:1},
  $ f_h=\mathcal{T}_1^{-1}(h-\E[h(X_1)]) \in \mathcal{F}_1$. On the
  other hand, Conditions \eqref{eq:6} and \eqref{eq:9} guarantee that
  $f_h \in \mathcal{F}_2$ for all $h$ because
  \begin{equation*}
    p_2 f_h =  \pi_0(x) \int_{a_1}^x(h(y)-\E[h(X_1)])p_1(y)dy 
  \end{equation*}
 is necessarily absolutely continuous. 
We conclude that all functions
$f_h=\mathcal{T}_1^{-1}(h-\E[h(X_1)])$ belong to the intersection
$\mathcal{F}_1\cap\mathcal{F}_2$. Hence
\begin{eqnarray}
\E[h(X_2)]-\E[h(X_1)]&=&\E[\mathcal{T}_1(f_h)(X_2)]\nonumber\\
&=&\E[\mathcal{T}_1(f_h)(X_2)]-\E[\mathcal{T}_2(f_h)(X_2)]\label{1}\\
&=&-\E[f_h(X_2)(\log \pi_0)'(X_2)].\nonumber
\end{eqnarray}
Equality~\eqref{1} follows from the
assumption that $f_h \in \mathcal{F}_2$ so that $\mathcal{T}_2f_h$
cancels when integrated with respect to $p_2$, whereas the last equality follows from Equation \eqref{fund}. Now we define
$g_h=f_h/\tau_1$ and recall that $\tau_1 \ge 0$ to get
$$
\left|\E[h(X_2)]-\E[h(X_1)]\right|=\left|\E\left[g_h(X_2)(\log
    \pi_0)'(X_2)\tau_1(X_2)\right]\right|\leq
||g_h||_\infty\E\left[\left|(\log
    \pi_0)'(X_2)\right|\tau_1(X_2)\right].
$$
It   follows from Lemma~\ref{sec:about-constants-1} that
$||g_h||_\infty\leq1$ for all $h\in{\rm Lip}(1)$, yielding  
\begin{align*}
  d_{\mathcal{W}}(P_1, P_2) & \le \E\left[\left|(\log
    \pi_0)'(X_2)\right|\tau_1(X_2)\right] = \E\left[\left|\pi_0'(X_1)\right|\tau_1(X_1)\right],
\end{align*}
the last equality again following from \eqref{eq:27}.
\end{proof}

Assumptions \eqref{eq:6} and \eqref{eq:9} are crucial. While
\eqref{eq:9} is in a sense innocuous (because $\mathcal{I}_2 \subset
\mathcal{I}_1$), \eqref{eq:6}  is quite stringent  yet hard to
verify in practice.
In Section \ref{sec:stein-factors-1} we provide a proof of the
following explicit and easy to verify sufficient conditions on $p$ for
these and hence Theorem \ref{maintheo} to hold.

\begin{proposition} \label{prop:easycond} We use the notations of
  Theorem \ref{maintheo}. Suppose that $\pi_0$, $p_1$ and $p_2$ are
  differentiable over their support and that their derivatives are
  integrable. Suppose that 
  \begin{equation*}
    \lim_{x \to a_2, b_2} \pi_0(x) p_1(x) \tau_1(x)  =     \lim_{x \to a_2, b_2} p_2(x) \tau_1(x)  = 0.
  \end{equation*}
Let $\rho_1 = p_1'/p_1$ and suppose also that 
\begin{equation*}
  \pi_0' p_1 \tau_1  =  p_2'  \tau_{1}  - \rho_1 \tau_1 p_2 \in L^1(dx).
\end{equation*}
  Then  Theorem \ref{maintheo} applies.
\end{proposition}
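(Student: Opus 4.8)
The plan is to verify that the stated hypotheses of Proposition~\ref{prop:easycond} imply the two abstract conditions \eqref{eq:6} and \eqref{eq:9} appearing in Theorem~\ref{maintheo}, after which the conclusion is immediate. The central observation is that for $h \in \mathrm{Lip}(1)$ the function $f_h = \mathcal{T}_1^{-1}(h - \E[h(X_1)])$ factors as $f_h = g_h \tau_1$ with $\|g_h\|_\infty \le 1$ by Lemma~\ref{sec:about-constants-1} (here using that $\tau_1$ is the unique Stein kernel of $P_1$, which is positive by \eqref{sec:score-function-stein-1}). Hence $p_2 f_h = \pi_0 p_1 f_h = g_h \cdot (\pi_0 p_1 \tau_1)$, and both conditions reduce to statements about $\pi_0 p_1 \tau_1 = p_2 \tau_1$ rather than about the generic integral $\pi_0(x)\int_{a_1}^x (h(y)-\E[h(X_1)])p_1(y)\,dy$.

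First I would treat the boundary condition \eqref{eq:9}. Since $\left|\pi_0(x)\int_{a_1}^x(h(y)-\E[h(X_1)])p_1(y)\,dy\right| = |g_h(x)|\,|p_2(x)\tau_1(x)| \le |p_2(x)\tau_1(x)|$ uniformly in $h$, the hypothesis $\lim_{x \to a_2,b_2} p_2(x)\tau_1(x) = 0$ forces \eqref{eq:9}. Next I would handle the integrability condition \eqref{eq:6}. Writing $Q_h(x) := \pi_0(x)\int_{a_1}^x(h(y)-\E[h(X_1)])p_1(y)\,dy = p_2(x)\tau_1(x) g_h(x)$, one wants $Q_h' \in L^1(dx)$. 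The clean way is to differentiate the product $p_2 f_h$ directly using the defining identity $\mathcal{T}_1 f_h = h - \E[h(X_1)]$, i.e. $(p_1 f_h)' = p_1(h - \E[h(X_1)])$: one gets
\begin{equation*}
Q_h' = (p_2 f_h)' = (\pi_0 p_1 f_h)' = \pi_0' \, p_1 f_h + \pi_0 (p_1 f_h)' = \pi_0' \, \tau_1 p_2 g_h + \pi_0 p_1 (h - \E[h(X_1)]).
\end{equation*}
The second summand is bounded in absolute value by (a Lipschitz estimate times) $p_2$, hence in $L^1$ since $p_2$ is a density and $\E|X_2| < \infty$ via $\E|(X_1-\mu_1)\pi_0(X_1)| < \infty$; for the first summand one uses $|g_h| \le 1$ and the assumption $\pi_0' p_1 \tau_1 \in L^1(dx)$, together with the identity $\pi_0' p_1 \tau_1 = p_2'\tau_1 - \rho_1 \tau_1 p_2$ (which is just $(\pi_0 p_1)' = p_2'$ combined with $p_1' = \rho_1 p_1$). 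This yields $Q_h' \in L^1(dx)$ with a bound independent of $h \in \mathrm{Lip}(1)$, establishing \eqref{eq:6}.

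The remaining hypotheses of Theorem~\ref{maintheo} — finiteness of $\mu_1,\mu_2$ and $\E|(X_1-\mu_1)\pi_0(X_1)| < \infty$ — are either assumed here directly or follow from $p_1,p_2$ having finite means, which I would note in passing. The main obstacle, such as it is, lies in justifying the termwise differentiation of the absolutely continuous function $p_2 f_h = \pi_0 \cdot (p_1 f_h)$: one needs $\pi_0$ and $p_1 f_h$ each absolutely continuous (the latter because $f_h \in \mathcal{F}_1$, the former by the differentiability hypothesis on $\pi_0$ with integrable derivative), so the product rule holds almost everywhere and the resulting $L^1$ bound legitimizes the claim $f_h \in \mathcal{F}_2$. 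Once \eqref{eq:6} and \eqref{eq:9} are in hand, Theorem~\ref{maintheo} applies verbatim and gives the bound \eqref{eq:10}.
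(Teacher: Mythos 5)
Your proposal is correct and follows essentially the same route as the paper: it reduces \eqref{eq:9} to the boundary condition on $p_2\tau_1=\pi_0 p_1\tau_1$ and splits $(p_2f_h)'$ into the two summands $\pi_0'\,p_1f_h$ and $\pi_0(p_1f_h)'=\pi_0 p_1(h-\E[h(X_1)])$, handling the second via the Lipschitz bound and finiteness of $\E|X_1|,\E|X_2|$, and the first via $f_h=\tau_1 g_h$ with $\|g_h\|_\infty\le 1$ (Lemma \ref{sec:technical-results}) together with the assumption $\pi_0'p_1\tau_1\in L^1(dx)$. Only note the slip in your displayed derivative, where the first summand should read $\pi_0'\,\tau_1\,p_1\,g_h$ rather than $\pi_0'\,\tau_1\,p_2\,g_h$; your subsequent bound correctly uses $\pi_0'p_1\tau_1\in L^1(dx)$, so the argument stands.
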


\begin{example}[Distance between
  Gaussians]\label{sec:dist-betw-prod-1} {To compare two Gaussian
    distributions, ${\cal{N}}(\mu_1, \sigma_1^2)$  and
    ${\cal{N}}(\mu_2, \sigma_2^2)$, order them so that $\sigma_2^2 \le
    \sigma_1^2$, and if $\sigma_1 = \sigma_2$ then assume that $\mu_1
    > \mu_2$.  
  If $P_1$ is ${\cal{N}}(\mu_1, \sigma_1^2)$ then
  $\tau_1(x) = \sigma_1^2$ is constant (see
  e.g.~\cite{Stein1986}).  With  $P_2$ being
  ${\cal{N}}(\mu_2, \sigma_2^2)$, all conditions in Proposition
  \ref{prop:easycond} are satisfied.   
Applying Theorem \ref{maintheo} and noting that $ (\log \pi_0(x))' = x
\left( \frac{1}{\sigma_1^2} - 
  \frac{1}{\sigma_2^2} \right) + \left( \frac{\mu_2}{\sigma_2^2} -
  \frac{\mu_1}{\sigma_1^2} \right)$, we obtain that \beas
  \label{eq:26}
  | \mu_2 - \mu_1 |\le d_{\mathcal{W}}(P_1, P_2) &\le &  {{\sigma_1^2 \E \left| X_2 \left(  \frac{1}{\sigma_1^2} -
    \frac{1}{\sigma_2^2} \right) +   \left( \frac{\mu_2}{\sigma_2^2} -
    \frac{\mu_1}{\sigma_1^2} \right) \right|  }} \nonumber \\
    &\le&  \left|
    \frac{\sigma_1^2}{\sigma_2^2} \mu_2- \mu_1  \right| +  \left(
    \frac{\sigma_1^2}{\sigma_2^2} -1  \right) \E \left|X_2 \right|. 
\enas
In the special case $\mu_2 = \mu_1 = 0$  we compute $\E \left|X_2
\right| = \sqrt{2/\pi} \sigma_2$ to get 
\begin{equation*}
  d_{\mathcal{W}}(P_1, P_2) \le \sqrt{\frac{2}{\pi}} \frac{\sigma_1^2-\sigma_2^2}{\sigma_2},
\end{equation*}
to be compared with  a similar result in \cite[Proposition
3.6.1]{NP11}. 

If $\mu_2 \neq 0$ then the general expression for
$ \E \left|X_2 \right|$ is not agreeable, which is why we suggest
using the inequality
$\E|X_2|\leq \left(\E[X_2^2]\right)^{1/2}=\sqrt{\sigma_2^2+\mu_2^2}$,
leading to
$$
| \mu_2 - \mu_1 |\le d_{\mathcal{W}}(P_1, P_2) \le \left|
    \frac{\sigma_1^2}{\sigma_2^2} \mu_2- \mu_1  \right| +  \left(
 \frac{\sigma_1^2}{\sigma_2^2} -1 \right) \sqrt{\sigma_2^2+\mu_2^2}.
$$
With $\mu_1=\mu_2=\mu$, the upper bound becomes $(|\mu|+\sqrt{\sigma_2^2+\mu^2})\left( 
    \frac{\sigma_1^2}{\sigma_2^2} -1 \right)$.  We have not found a
  similar result  in the literature  (outside of the centered case) and 
   {{computing the
 Wasserstein distance directly using \eqref{wasscalc} is prohibitive
 as the cdf's are not available in closed form.}}  }
\end{example}

\begin{remark}
  Our upper bounds are not restricted to the Wasserstein case only. Indeed,
  mimicking large parts of the proof of Theorem~\ref{maintheo}, we
  obtain the general bound
\begin{equation}
  \label{eq:20}
  d_{\mathcal{H}} (P_1, P_2) \le     \kappa_{\mathcal{H}} {{\E\left[\left|\pi_0'(X_1)\right|\tau_1(X_1)\right]}}  
\end{equation}
with
$\kappa_{\mathcal{H}}=\sup_{h\in\mathcal{H}}||\mathcal{T}_1^{-1}(h-\E_1h)/\tau_1||_\infty$
and $\mathcal{H}$ a measure-determining class of functions (the
Kolmogorov distance corresponds to the class of indicators of
half-lines, the Total Variation distance to the indicators of Borel
sets). Usefulness of \eqref{eq:20} hinges around availability of
bounds similar to Lemma~\ref{sec:about-constants-1} on the more
general constant $\kappa_{\mathcal{H}}$.
\end{remark}
Unravelling the lower bound and using~\eqref{sec:score-function-stein-1} in the upper bound of \eqref{eq:10} we also obtain the
following weaker but perhaps  more transparent result. 

\begin{corollary}\label{varcor}
  Under the same assumptions as {{for Theorem \ref{maintheo}, with $X_2 \sim P_2$, }} 
\begin{equation}
\label{eq:var}
\left| \E [X_2]-\E [X_1] \right|\le  d_{\mathcal{W}} (P_1, P_2) \le \| \pi_0'\|_{\infty} \mathrm{Var} [X_1]. 
\end{equation}
\end{corollary}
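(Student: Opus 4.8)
The plan is to deduce Corollary~\ref{varcor} directly from Theorem~\ref{maintheo} by simplifying each of the two bounds in \eqref{eq:10}.

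For the lower bound, I would recall from the proof of Theorem~\ref{maintheo} --- specifically the chain of equalities \eqref{eq:43} --- that $\E[\pi_0'(X_1)\tau_1(X_1)] = \E[X_2] - \E[X_1]$, since $Id \in \mathrm{Lip}(1)$ and the Stein kernel identity \eqref{eq:steqdef} applies to $\varphi = Id$. Hence the left-hand side of \eqref{eq:10} equals $|\E[X_2] - \E[X_1]|$, which is exactly the claimed lower bound. (Alternatively, one can invoke directly that $d_{\mathcal{W}}(P_1,P_2) \ge |\E[X_2]-\E[X_1]|$ because the identity is Lipschitz-1, without even passing through $\tau_1$.)

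For the upper bound, I would start from $\E[|\pi_0'(X_1)|\tau_1(X_1)] \le \|\pi_0'\|_\infty\, \E[\tau_1(X_1)]$, pulling the sup-norm of $\pi_0'$ out of the expectation (valid since $\tau_1 \ge 0$ by \eqref{sec:score-function-stein-1}). Then I would apply the second property in \eqref{sec:score-function-stein-1}, namely $\E[\tau_1(X_1)] = \mathrm{Var}[X_1]$, to conclude that the right-hand side of \eqref{eq:10} is at most $\|\pi_0'\|_\infty\, \mathrm{Var}[X_1]$. Chaining this with the upper bound from Theorem~\ref{maintheo} gives $d_{\mathcal{W}}(P_1,P_2) \le \|\pi_0'\|_\infty\, \mathrm{Var}[X_1]$, completing the proof.

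There is essentially no obstacle here: the corollary is a two-line consequence of Theorem~\ref{maintheo} together with the elementary facts $\tau_1 \ge 0$, $\E[\tau_1(X_1)] = \mathrm{Var}[X_1]$, and $Id \in \mathrm{Lip}(1)$. The only mild point worth a sentence is to note that the bound is vacuous when $\|\pi_0'\|_\infty = \infty$, so it is informative precisely when $\pi_0'$ is bounded; and that the assumptions of Theorem~\ref{maintheo} are inherited verbatim, so no new hypotheses are needed.
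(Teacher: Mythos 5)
Your proposal is correct and matches the paper's own (implicit) argument: the lower bound is exactly the "unravelled" identity \eqref{eq:43} (equivalently, $Id\in\mathrm{Lip}(1)$), and the upper bound follows by bounding $|\pi_0'|$ by $\|\pi_0'\|_\infty$ and using $\tau_1\ge 0$ together with $\E[\tau_1(X_1)]=\mathrm{Var}[X_1]$ from \eqref{sec:score-function-stein-1}. Nothing is missing.
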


We shall use Corollary \eqref{eq:var} in Section \ref{sec:Bayes}. We
stress the fact that there is \emph{no} normalizing constant appearing
in the bounds \eqref{eq:10} and \eqref{eq:var}. Also, the absence of 
Stein kernel in \eqref{eq:var} is in somes cases an advantage
because the Stein kernel is not always easy to compute. 


There are many ways of expressing the Wasserstein distance
\eqref{dist} between two random variables. In general, if $P_1$ has cumulative distribution function (cdf) $F_{P_1}$ and if $P_2$ has
 cdf $F_{P_2}$  then
\begin{equation} \label{wasscalc} d_{\mathcal{W}}(P_1, P_2) = \int_\R |
  F_{P_1}(x) - F_{P_2}(x) | dx = \int_0^1 | F_{P_1}^{-1}(u) - F_{P_2}^{-1} (u) | du = \inf \E \left| \xi_1-\xi_2 \right|
\end{equation} 
where the infimum in this last expression is taken over all possible
couplings $(\xi_1, \xi_2)$ of $(P_1, P_2)$ (see
e.g.~\cite{vallender1974calculation, villani2008optimal}). Often exact
computable expressions of Wasserstein distances tend to be difficult
to obtain. The similarity between the upper and lower bounds in
\eqref{eq:10} encourages us to formulate the next result.
\begin{corollary}\label{theo2}
If $X_i \sim P_i, i=1, 2,$ {{are as in  Theorem \ref{maintheo}}} and if $\pi_0$ is monotone increasing or decreasing,  then 
\begin{equation}
  \label{eq:15}
  d_{\mathcal{W}}(P_1, P_2) = \left| \E[X_2]- \E [X_1] \right|
  = \E \left[  \left|\pi_0'(X_1)  \right|\tau_1(X_1) \right] {{= \E\left[\left|(\log
    \pi_0)'(X_2)\right|\tau_1(X_2)\right]}} .
\end{equation}
 \end{corollary}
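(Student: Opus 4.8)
The plan is to show that when $\pi_0$ is monotone, the upper and lower bounds of Theorem~\ref{maintheo} coincide, which forces $d_{\mathcal{W}}(P_1,P_2)$ to equal their common value. The crucial observation is that the Stein kernel $\tau_1$ is nonnegative everywhere, by \eqref{sec:score-function-stein-1}. If $\pi_0$ is monotone increasing then $\pi_0'(x) \ge 0$ on $\III_2$, so $\pi_0'(x)\tau_1(x) = |\pi_0'(x)|\tau_1(x)$ pointwise, whence
\[
\left|\E\left[\pi_0'(X_1)\tau_1(X_1)\right]\right| = \E\left[\left|\pi_0'(X_1)\right|\tau_1(X_1)\right];
\]
the case $\pi_0$ decreasing is identical with a sign flip. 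Combined with the two-sided inequality \eqref{eq:10}, this sandwich gives $d_{\mathcal{W}}(P_1,P_2) = \E\left[\left|\pi_0'(X_1)\right|\tau_1(X_1)\right]$.

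Next I would recover the first identity $d_{\mathcal{W}}(P_1,P_2) = \left|\E[X_2]-\E[X_1]\right|$ by reusing the computation \eqref{eq:43} from the proof of Theorem~\ref{maintheo}: there it is shown that $\E[X_2]-\E[X_1] = \E\left[\tau_1(X_1)\pi_0'(X_1)\right]$, and since this quantity is now of one sign, its absolute value equals $\E\left[\left|\pi_0'(X_1)\right|\tau_1(X_1)\right]$. Hence all three of the lower-bound expression, the upper-bound expression, and $\left|\E[X_2]-\E[X_1]\right|$ agree.

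Finally, for the last identity $\E\left[\left|\pi_0'(X_1)\right|\tau_1(X_1)\right] = \E\left[\left|(\log\pi_0)'(X_2)\right|\tau_1(X_2)\right]$, I would simply change the reference measure using $p_2 = \pi_0 p_1$: for any nonnegative measurable $\psi$ we have $\E[\psi(X_1)\pi_0(X_1)] = \E[\psi(X_2)]$, and taking $\psi = \left|(\log\pi_0)'\right|\tau_1 = \left|\pi_0'/\pi_0\right|\tau_1$ gives $\psi(x)\pi_0(x) = \left|\pi_0'(x)\right|\tau_1(x)$, yielding the claimed equality. This is exactly the manipulation already used at the end of the proof of Theorem~\ref{maintheo} to pass between the $X_1$- and $X_2$-forms of the upper bound, so I would cite \eqref{eq:27} and that step rather than redo it.

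I do not anticipate a serious obstacle here: the result is essentially a corollary of Theorem~\ref{maintheo} together with positivity of $\tau_1$. The only point requiring a word of care is the remark, mentioned in the excerpt, that monotonicity of $\pi_0 = p_2/p_1$ is equivalent to the likelihood-ratio ordering between $P_1$ and $P_2$; I would state this equivalence explicitly (it is immediate from the definition of $\le_{LR}$) so that the hypothesis of Corollary~\ref{theo2} connects cleanly to the stochastic-ordering formulation used in Corollary~\ref{theo2}'s statement, but no real work is needed.
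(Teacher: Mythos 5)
Your proposal is correct and follows essentially the same route as the paper: the paper also obtains the corollary by noting that positivity of $\tau_1$ together with the constant sign of $\pi_0'$ makes the lower and upper bounds of \eqref{eq:10} coincide, identifies this common value with $\left|\E[X_2]-\E[X_1]\right|$ via \eqref{eq:43}, and passes between the $X_1$- and $X_2$-forms by the change of measure $p_2=\pi_0 p_1$ exactly as in the end of the proof of Theorem~\ref{maintheo}. The paper merely adds, as a remark rather than as the proof, that the first identity also follows from the classical fact that likelihood-ratio ordering implies $F_{P_2}\le F_{P_1}$ and hence $d_{\mathcal{W}}=\int(F_{P_1}-F_{P_2})$.
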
 
 Note how the second expression in \eqref{eq:15} can be immediately
 obtained from the first by applying the same argument as in
 \eqref{eq:43}. Now while the second expression in \eqref{eq:15} is
 new, the first is in fact not. Indeed the condition that $\pi_0$ be
 monotone in Corollary~\ref{theo2} is equivalent to requiring
 $X_1 \ge_{LR} X_2$ (stochastically ordered in the sense of likelihood
 ratio, see e.g.~\cite[Section 9.4]{ross1996stochastic} or
 Example~\ref{sec:exo1}).   If $ X_1 \le_{LR} X_2$
 then $F_{P_2} \le F_{P_1}$ (see for example \cite[Theorem
 1.C.4]{shaked2007stochastic}), so that
 $ d_{\mathcal{W}}(X_1, X_2) = \int_\R ( F_{P_1}(x) - F_{P_2}(x) ) dx = \E
 [X_2]-\E[X_1]$.

\begin{example}[{Distance between Azzalini-type skew-symmetric
  distributions}\label{sec:exo3}]
  Consider a symmetric density $p_1$ on the real line.  The so-called
\emph{Azzalini-type skew-symmetric distributions} are constructed from {{such a pdf}} 
$p_1$ by considering the densities $p_2(x)= 2p_1(x) G(\lambda x)$
with $G$ the {{cdf}} of a univariate symmetric distribution {{with pdf}}  $g$ and
$\lambda\in\R$ a parameter (called skewness parameter); see
\cite{hallin2014skew} for an overview of these skewing mechanisms and of their
applications.  The founding example is Azzalini \cite{Azza85}'s
\emph{skew-normal density} $2\phi(x)\Phi(\lambda x)$ (denoted
$\mathcal{SN}(0, 1, \lambda)$), where $\phi$ and $\Phi$ respectively
stand for the standard normal density and cumulative distribution
function.


Corollary~\ref{theo2} provides, under mild conditions on $g$ and $G$,
an exact expression for the Wasserstein distance between $P_1$ with
pdf $p_1$ and its skew-symmetric counterpart $P_2$ with pdf $p_2$
 since
in this case $(\log\pi_0)'(x)=\lambda g(\lambda x)/G(\lambda x)$ which
is positive or negative depending on the sign of $\lambda$ {{as both $g$ and $G$ are positive on the support of $P_2$}}. Thus we
have {{$\pi_0'(x) = 2 \lambda g(\lambda x)$ and }} 
\begin{equation*}
  d_{\mathcal{W}}(p_1, p_2)
 =2 |\lambda| \E
 \left[\tau_1(X_1)g(\lambda X_1) \right]. 
\end{equation*}
Perhaps the most interesting instance of the above is the comparison
of the {{standard}} normal with the skew-normal {{(all conditions in
    Proposition~\ref{prop:easycond} are satisfied in this case) :}} 
$$
d_{\mathcal{W}}\left(\mathcal{N}(0,1),\mathcal{SN}(0,1,\lambda)\right)=\sqrt{\frac{2}{\pi}}
\frac{|\lambda|}{\sqrt{1+\lambda^2}} 
$$
(recall that  $\tau_1(x) = 1$). Letting $\lambda\rightarrow\infty$ we obtain that the distance between
the half-normal with density $2\phi(x)\mathcal{I}_{x\geq0}$ and the
normal is $\sqrt{{2}/{\pi}}$, see also \cite{dobler2013stein}. As in
the previous example, such results are not easy to obtain directly
from \eqref{wasscalc}.
\end{example}

{{Likelihood ratio orderings have a natural role in comparing parametric densities.}} 
Let $p(x; \theta)$ be a parametric family of densities with parameter
of interest $\theta \in \R$ (see e.g. \cite{LS13p} for discussion
and references). Set $p_1(\cdot) = p(\cdot; \theta_1)$ and
$p_2(\cdot) = p(\cdot; \theta_2)$. The family $p(x; \theta)$ is said
to have monotone \emph{likelihood ratio} if
$x \mapsto p(x; \theta_2)/p(x; \theta_1)$ is non decreasing as soon as
$\theta_2>\theta_1$ (and vice-versa).  If $P_1$ has pdf $p_1$ and
if $P_2$ has pdf $p_2$ then under monotone likelihood ratio,
$P_2 \le  P_1$. The property of monotone likelihood ratio is
intrinsically linked with the validity of one-sided tests in
statistics, see \cite{KR1956}.

\begin{example}[{Distances within the exponential family}\label{sec:exo1}]

A 
noteworthy class of parametric distributions which satisfy the property of monotone likelihood ratio is the \emph{canonical regular exponential family}
$     p(x; \theta) = \ell(x) e^{\theta x - A(\theta)}$
for some scalar functions $\ell$ and $A$, with the range of the
distribution being independent of $\theta$, see for example
\cite[page 639]{KR1956}. If $\theta_1 > \theta_2$ then 
  $(\log\pi_0)'(x)=\left( \log \frac{p_2(x)}{p_1(x)} \right)' = \theta_2-\theta_1<0$
  for all $x \in \R$ and thus from
  \eqref{eq:15} we find with $X_2 \sim P_2$ that $ d_{\mathcal{W}}(P_1, P_2)  =  |\theta_2-\theta_1| \E \left[
    \tau_1(X_2) \right]$
under mild and easy-to-check conditions on $P_1$ and $P_2$. 
\end{example}

\begin{example}[{Distances between ``tilted'' distributions} \label{sec:exo4}]
 Fix a density $p_1$ with mean $\mu_1$ and consider, among all other
 densities $g$ with same support and fixed but different mean
 $\mu_2\neq \mu_1$, the density that minimizes the Kullback-Leibler divergence
\begin{equation*}
  KL(g|| p_1) = \int g(x) \log \left( \frac{g(x)}{p_1(x)} \right)dx. 
\end{equation*}
The Euler-Lagrange equation for the constrained variational problem is
$ \log g(x) = \log p_1(x) + \lambda_1x + \lambda_2$ solved by
\begin{equation}\label{eq:17}
  p_2(x) = p_1(x) \frac{e^{\lambda_1x}}{M_1(\lambda_1)} 
\end{equation}
with $M_1(t) = \E [e^{tX_1}]$ the moment generating function of $X_1\sim p_1$
and  $\lambda_1$ a solution to 
  \begin{align*}
    \frac{d}{dt}(\log M_1(t))_{t=\lambda_1} = \mu_2
  \end{align*}
  in order to guarantee $\E[X_2] = \mu_2$. We call \eqref{eq:17} a
  ``tilted'' version of $p_1$ (following the classical notion of
  exponential tilting, see e.g.~\cite{efron1981nonparametric}).  It is easy to compute
\begin{align*}
  KL(p_2 \, || \, p_1) = \lambda_1\mu_2 - \log M_1(\lambda_1).
\end{align*} 
Setting $\pi_0(x) = {e^{\lambda_1x}}/{M_1(\lambda_1)}$ we have
$\log(\pi_0)'(x) = \lambda_1$ and 
\begin{equation}
  \label{eq:19}
  d_{\mathcal{W}}(p_1, p_2) = |\lambda_1| \E \left[ \tau_1(X_2)
  \right]
\end{equation}
provided that the appropriate conditions are satisfied.

For the sake of illustration, take $p_1$ the Gamma distribution on the
positive half line with density
$p_1(x;\lambda,k) = \frac{1}{\Gamma(k)}
e^{-x/\lambda}x^{k-1}\lambda^{-k} $.
Then $M_1(t)=(1-\lambda t)^{-k}$ {{for $t < \frac{1}{\lambda}$}} and
$\lambda_1 = \frac{1}{\lambda}- \frac{k}{\mu_2}$. Moreover
$ \tau_1(x) = \lambda x$.  It is thus easy to check in this case that
all conditions in Proposition~\ref{prop:easycond} are satisfied. This
allows us to deduce from \eqref{eq:19} that
$$
d_{\mathcal{W}}(p_1, p_2) =  |\mu_2 - \lambda k|  
$$
which nicely complements $KL(p_2||p_1) = \frac{\mu_2}{\lambda}-k + \log \left(
  \frac{k\lambda}{\mu_2} \right)^k$ {{as an alternative comparison statistic}}.
\end{example}

 \section{On the  influence of the prior  in Bayesian statistics}
 \label{sec:Bayes}

 We now tackle the problem that motivated Theorem~\ref{maintheo} : assessing the impact of the choice of the prior
 distribution on the resulting posterior distribution in Bayesian
 statistics. In all examples we consider the conditions in
 Proposition~\ref{prop:easycond} are easy to verify explicitly.

We first fix the notations. {{Assume that the observation $x$ comes from a parametric model with pdf $f(x; \theta)$ with $\theta \in \Theta$ - $f(x; \theta)$ is often called the {\it{likelihood}} or the {\it{sampling density}}. We turn this model into a pdf for $\theta$ through 
 $$p_1(\theta; x) = \kappa_1(x) f(x; \theta)$$
 where $\kappa_1(x) = \left( \int f(x; \theta) d\theta \right)^{-1}$, and we assume that $\kappa_1 < \infty.$
 Let $P_1$ have pdf $p_1$ and call its Stein kernel $\tau_1$. Choose a possibly improper prior density $\pi_0(\theta)$, and let 
 $$ p_2(\theta; x) = \pi_0(\theta; x) p_1(\theta; x)$$ where 
 $$ \pi_0(\theta; x) = \kappa_2(x) \pi_0(\theta) \mbox{ such that } \int  p_2(\theta; x) d\theta =1.$$
 Then 
 $$ 1 = \int  p_2(\theta; x) d\theta  = \kappa_2(x) \int \pi_0(\theta) p_1(\theta; x) d\theta  = \kappa_2(x) \E[ \pi_0(\Theta_1)],$$
 where $\Theta_1$ has distribution $P_1$
 }} 
   which gives  an
  expression for the normalizing constant. {{Let $P_2 = P_2(\cdot;x)$ be the probability distribution on $\Theta$ with  pdf $p_2(\cdot; x) $. Then $P_2$ is the posterior distribution of $\theta$ under the prior $\pi_0$ and the data $x$;
  moreover $P_1$ can be seen as the distribution of $\theta$ under  a uniform prior and the data $x$.}}

Now we extract from
\eqref{eq:10} of Theorem~\ref{maintheo} the first bounds on the impact of a prior on the posterior distribution : 
  \begin{equation}
    \label{eq:24}
  \frac{\left|\E  \left[\tau_1(\Theta_1) \pi_0'(\Theta_1)
      \right]\right|}{\E [\pi_0(\Theta_1)]}\leq
    d_{\mathcal{W}}(P_2,P_1)  \leq \frac{\E
    \left[\tau_1(\Theta_1) \left|\pi_0'(\Theta_1) \right| \right]}{\E [\pi_0(\Theta_1)]}
  \end{equation}
which  can also be rewritten as
\begin{equation}
  \label{eq:22}
 \left| \E \left[ \Theta_2 \right] - \E \left[ \Theta_1 \right]
 \right| =   \left|\E  \left[\tau_1(\Theta_2) \rho_0(\Theta_2) \right]\right|\leq
    d_{\mathcal{W}}(P_2,P_1)  \leq \E
    \left[\tau_1(\Theta_2) \left|\rho_0(\Theta_2) \right| \right]
\end{equation}
with $\Theta_2 \sim P_2$ and 
\begin{equation*}
  \rho_0(\theta)  = \frac{\pi_0'(\theta)}{\pi_0(\theta)},
\end{equation*}
the score function {{of $\pi_0(\theta; x)$ with respect to $\theta$, which does not depend on the data $x$.}}
  As we shall see
in the forthcoming sections {{which treat some classical examples in Bayesian statistics}}, (\ref{eq:22}) often turns out to
be handier for computations than (\ref{eq:24}).


 \subsection{A normal model} \label{sec:Bayes2}

 Consider the simple setting where $x=(x_1, \ldots, x_n)$ is a random
 sample from a ${\cal N}(\theta, \sigma^2)$ population, where the
 scale $\sigma$ is known and the location $\theta$ is the parameter of
 interest, and assume that the prior $\pi_0 (\theta) > 0 $ for all
 $\theta \in \Theta$ is differentiable. The likelihood $f (x; \theta)$ of the normal model can be
 factorized into
\begin{align*}
  f (x; \theta) &= (2 \pi \sigma^2)^{-\frac{n}{2}}
                     \exp\left\{- \frac{1}{2} \sum_{i=1}^n \frac{(x_i - \theta)^2}{\sigma^2}
                     \right\}\\
                   &= (2 \pi \sigma^2)^{-\frac{n}{2}}
                     \exp\left\{-\frac{1}{2\sigma^2}\left(\sum_{i=1}^nx_i^2-n\bar{x}^2\right)\right\}\exp\left\{-
                     \frac{1}{2}
                     \frac{(\theta-\bar{x})^2}{\sigma^2/n}\right\} \\
                     &\propto {{ \exp\left\{-
                     \frac{1}{2}
                     \frac{(\theta-\bar{x})^2}{\sigma^2/n}\right\}}} \mbox{ when viewed as a function of $\theta$}
\end{align*}
where $\bar{x}=\frac{1}{n}\sum_{i=1}^nx_i$. Thus, 
{{$P_1 =  \mathcal{N}(\bar{x},\sigma^2/n)$.}} 
 Since  $\tau_1$ is
constant, equal to $\sigma^2/n$, the variance of $\Theta_1 \sim P_1$, the bound
\eqref{eq:24} becomes
\begin{align*}
  \frac{\sigma^2}{n} \frac{\left| \E \left[ \pi_0'(\Theta_1) \right]  \right|}{\E \left[ \pi_0(\Theta_1) \right]} \le
  d_{\mathcal{W}}(P_2, {{P_1}} )  \leq
  \frac{\sigma^2}{n}\frac{\E \left[\left|  \pi_0'(\Theta_1) \right|\right]}{\E \left[ \pi_0(\Theta_1) \right]} 
\end{align*}
and \eqref{eq:22} becomes
\begin{align} \label{normalverygenprior}
|\E[\Theta_2] - \bar{x}| =
  \frac{\sigma^2}{n} |\E[\rho_0(\Theta_2)]|
  \leq d_\mathcal{W}(P_1, P_2)    \le \frac{\sigma^2}{n}\E[|\rho_0(\Theta_2)|].
\end{align}
Both inequalities are equalities in the case that $\pi_0$ is monotone. 

\subsection{Normal prior and normal model}\label{exBayes2}

Consider the same setting as in the previous section with the
additional information that the prior $\pi_0$ is the density of a
${\cal N}(\mu, \delta^2)$, where $\mu$ and $ \delta^2>0$ are
known. Then the posterior $P_2$ is also normal, since
\beas p_2(\theta; {x}) &\propto& \exp\left\{- \frac{1}{2}
  \left(\frac{(\theta-\bar{x})^2}{\sigma^2/n}+ \frac{(\theta -
      \mu)^2}{\delta^2} \right) \right\}.  \enas Defining
$a= \frac{n}{\sigma^2} +\frac{1}{\delta^2}$ and
$b(x)= \frac{\bar{x}}{\sigma^2/n} + \frac{\mu}{\delta^2}$, we see that 
$P_2 = {\cal N}\left( \frac{b(x)}{a}, \frac{1}{a}\right)$.

Since the prior $\pi_0$ is not monotone, we cannot exactly evaluate the
Wasserstein distance between $P_1$ and $P_2$. However then
we can write $\rho_0(\theta) = - ({{\theta}}-\mu)/\delta^2$ to obtain 
\begin{equation}
  \label{eq:21}
 \frac{\sigma^2}{n\delta^2 + \sigma^2}  \left|  \bar{x}- \mu \right|\le
  d_{\mathcal{W}} (P_1, P_2)
\le \frac{\sigma^2}{n\delta^2 +
  \sigma^2}  \left|  \bar{x}- \mu \right| +  \frac{\sqrt{2}}{\sqrt{\pi}} \frac{\sigma^3}{n \delta
  \sqrt{\delta^2 n + \sigma^2}}.
\end{equation}
{To see this, the lower bound follows directly from simplifying the difference of the expectations, 
$$ \left|  \frac{b(x)}{a} - \bar{x}\right| = \frac{\sigma^2}{n\delta^2 + \sigma^2}  \left|  \bar{x}- \mu \right|.$$ For the upper bound, {{using
$\rho_0(\theta) = - ({{\theta}}-\mu)/\delta^2$ in \eqref{normalverygenprior}}} gives 
\beas
d_{\mathcal{W}} (P_1, P_2) &\le & \frac{\sigma^2}{n}\E\left[ |\rho_0(\Theta_2) |   \right] \\
&=& \frac{\sigma^2}{n\delta^2} \E [\left| \Theta_2 - \mu \right|] \\
&\le& \frac{\sigma^2}{n\delta^2} \left( \E\left[ \left| \Theta_2  - \frac{b(x)}{a} \right|\right] + \left|  \frac{b(x)}{a} - \mu \right| \right)  \\
&=& \frac{\sqrt{2}}{\sqrt{\pi}} \sqrt{\frac{1}{a}}
\frac{\sigma^2}{n\delta^2} + \frac{\sigma^2}{n\delta^2} \left|
  \frac{b(x)}{a} - \mu \right|
\\
&=& \frac{\sqrt{2}}{\sqrt{\pi}} \frac{\delta\sigma}{\sqrt{\delta^2 n +
    \sigma^2}} \frac{\sigma^2}{n\delta^2} + \frac{\sigma^2}{n\delta^2}
\frac{\delta^2}{\delta^2 + \frac{\sigma^2}{n}} \left| \bar{x}- \mu
\right|
\\
&=& \frac{\sqrt{2}}{\sqrt{\pi}} \frac{\sigma^3}{n \delta
  \sqrt{\delta^2 n + \sigma^2}} + \frac{\sigma^2}{n\delta^2 +
  \sigma^2} \left| \bar{x}- \mu \right|, \enas 
which yields the upper bound in \eqref{eq:21}.

Inequality (\ref{eq:21}) provides a quite concrete and intuitive idea
of the impact of the prior. First we see that, for
$n\rightarrow\infty$, the distance becomes zero, as is well known. The
prior variance $\delta^2$ has the same influence, which is also
natural given that the prior then tends towards an improper prior,
too. {{If the data are unfavourable so that $|\bar{x} - \mu|$ is large compared to $n$, then the Wasserstein distance between the two posterior distributions will be large. Due to the law of large numbers, for large $n$ the probability that $|\bar{x} - \mu| >  \delta^2 n + \sigma^2$ is small; but in contrast to such asymptotic considerations,  the bound \eqref{eq:21} makes the influence of the data on the distance explicit.}} Further the upper and lower bounds only differ
by an $O(n^{-3/2})$ term, hence at a $1/n$ precision, we have an
exact expression for the Wasserstein distance. Finally, the $O(1/n)$
term in both bounds perfectly reflects the {{intuition that}} the better
the guess of the prior mean $\mu$ (w.r.t. the data), the smaller the
influence of the prior.

\subsection{The binomial model}\label{exBayes3}

As next example we treat the case of $n$ independent and identically
distributed Bernoulli random variables with parameter of interest
$\theta\in[0,1]$; alternatively, we may say we have a single
observation $y \in \{0,1, \ldots , n\} $ from a Binomial distribution
with known $n$ and parameter of interest $\theta$. The corresponding
sampling density is
$$f (y;\theta) = {n \choose y} \theta^y (1-\theta)^{n-y}$$
and 
$p_1(\theta; y) = \kappa_1 (y) \theta^y (1-\theta)^{n-y}$ is a Beta
density with 
$$ \kappa_1(y) = \frac{1}{B(y+1, n-y+1)},$$
where $B(\cdot,\cdot)$ denotes the Beta function, 
{{and $P_1 = P_1(\cdot;y) = \mathrm{Beta}(y+1, n-y+1)$ is a Beta distribution.}}

Recall that, if 
$X \sim p(x) = \frac{1}{B(\alpha, \beta)} x^{\alpha-1}(1-x)^{\beta-1}$
then
\begin{equation*}
  \E[X] = \frac{\alpha}{\alpha +\beta}, \mbox{  }  \E[X^2] = \frac{\alpha(1+\alpha)}{(\alpha+\beta)(\alpha+\beta+1)} \mbox{ and } {\rm Var} [X] = \frac{\alpha \beta}{(\alpha+\beta)^2(\alpha+\beta+1)} .
\end{equation*}
The Stein kernel is $ \tau (x) = \frac{x(1-x)}{\alpha+\beta}$ and in
particular $\tau_1(\theta) = \frac{\theta(1-\theta)}{n+2}.$ 
Corollary \ref{varcor} gives that, for any differentiable prior 
$\pi_0$ on $\III = [0,1]$, 
\begin{equation*}\label{genbinbound}
 d_{\mathcal{W}} (P_1, P_2) \le  \sup_{0 \le \theta \le 1} | \pi_0' (\theta)| \frac{(y+1) ( n-y+1) }{(n+2)^2 (n+3)}.
\end{equation*} 
For $y$ close to $\frac{n}{2}$, this bound is of order $n^{-1}$. In
particular, for any $0 \le y \le n$, for a prior with bounded derivative, the Wasserstein distance
converges to zero as $n \rightarrow \infty$ no matter which data are
observed, but the data may affect the rate of convergence.
Next we consider some choices of prior densities which may 
 not have bounded derivatives.

 \subsubsection{Beta prior} 
For  a Beta prior 
\begin{equation}
  \label{eq:33}
  \pi_0(\theta) \propto \theta^{\alpha-1}(1-\theta)^{\beta-1},
\end{equation}
{the assumptions of Theorem \ref{maintheo} are satisfied but}
$\sup_{0 \le \theta \le 1} | \pi_0' (\theta)| $ is infinite unless
both $\alpha$ and $\beta$ are greater than or equal to 2 (or
$\alpha = \beta = 1$). Let $P_1 $ denote the ${\rm Beta}(y+1, n-y+1)$
distribution and $P_2$ the posterior distribution using the prior
\eqref{eq:33}.  It is well known that $P_2$ is again Beta distributed
: the Beta distributions are \emph{conjugate priors} for the Binomial
distribution (similarly as the normal prior is conjugate in the normal
model, see the previous section); {{in fact, it is easy to see that
    $P_2$ is the ${\rm Beta}(\alpha + y,\beta+ n-y)$ distribution.

We shall show that 
\bea \label{betabound} 
\left| \frac{y+1}{n+2}  \left(  \frac{\alpha + \beta - 2}{ n+ \alpha + \beta} \right) - \frac{\alpha - 1}{n+ \alpha + \beta} \right| &\le&   d_{\mathcal{W}} (P_1, P_2) \nonumber \\ 
  &\le & \frac{1}{n+2} \left\{ | \alpha -1| + \frac{y + \alpha}{n + \alpha + \beta} ( | \beta - 1| - | \alpha - 1|) \right\}.
\ena 

To this end, let $\Theta_1 \sim P_1$ and $\Theta_2 \sim P_2$. 
With \eqref{eq:22} we have the immediate lower bound on the Wasserstein distance, namely 
\beas
 d_{\mathcal{W}} (P_1, P_2) &  \ge & | \E[ \Theta_2] - \E[ \Theta_1] |  \\ 
&=& \left| \frac{y+1}{n+2} - \frac{y + \alpha}{n + \alpha + \beta}  \right|\\
&=& \left| \frac{y+1}{n+2}  \left( 1 - \frac{n+2}{ n+ \alpha + \beta} \right) - \frac{\alpha - 1}{n+ \alpha + \beta} \right|\\
&=& \left| \frac{y+1}{n+2}  \left(  \frac{\alpha + \beta - 2}{ n+ \alpha + \beta} \right) - \frac{\alpha - 1}{n+ \alpha + \beta} \right|  .
\enas

For an upper bound, we calculate that 
$$ \rho_0(\theta) = \frac{(\alpha -1) ( 1 - \theta) - (\beta - 1) \theta}{\theta(1-\theta)}$$ 
and hence
$$\tau_1(\theta) \rho_0(\theta) = \frac{1}{n+2} \{ (\alpha -1) ( 1 - \theta) - (\beta - 1) \theta\}.$$
Using \eqref{eq:22} we obtain the claimed upper bound 
\beas
 d_{\mathcal{W}} (P_1, P_2) &  \le &
 \frac{1}{n+2} \E \left|  (\alpha -1) ( 1 - \Theta_2) - (\beta - 1) \Theta_2 \right| \\
 &  \le &
 \frac{1}{n+2} \left\{ |  \alpha -1| \E  [ 1 - \Theta_2] + |\beta - 1| \E [ \Theta_2] \right\} \\
&=&  \frac{1}{n+2} \left\{ | \alpha -1| + \frac{y + \alpha}{n + \alpha + \beta} ( | \beta - 1| - | \alpha -1 | \right\}. 
\enas

\medskip 
Some comments on the bound \eqref{betabound} are in order. Firstly, both the upper and the lower bound vanish when $\alpha = \beta = 1$. Secondly, unless $\alpha=1$, the upper bound is  of order $O(n^{-1})$, no matter how favourable the data $y$ are.

}}

\subsubsection{The Jeffreys prior} 
An alternative popular prior is 
\begin{equation*}
  \label{eq:34}
  {{\pi}}_0(\theta) = \frac{1}{\sqrt{\theta(1-\theta)}},
\end{equation*}
the so-called Jeffreys prior obtained for $\alpha=\beta=1/2$ in
\eqref{eq:33}. This is  an improper prior {{which satisfies the assumptions of Theorem \ref{maintheo}.}}
The posterior distribution ${{P}}_2$  is 
${\rm Beta}(y + \frac12, n-y + \frac12)$. Moreover
$${{\rho}}_0(\theta) = \frac{2 \theta - 1}{ 2 {\theta(1 - \theta)}}$$
and 
$$\tau_1(\theta) {{\rho}}_0(\theta) = \frac{1}{2(n+2)} (2 \theta -1).$$

Using \eqref{eq:22}  we obtain that 
$$
\frac{1}{(n+1)} \left| \frac{y+1}{n+2} -\frac12 \right|  \le  d_{\mathcal{W}} (P_1, {{P}}_2) 
$$
and 
$$ 
d_{\mathcal{W}} (P_1, {{P}}_2)  \le  \frac{1}{n+2} \left\{  \sqrt{
\frac{\left( y + \frac12\right) \left( n - y + \frac12\right)}{(n+2)(n+1)^2}}
+ \left| \frac{y + \frac12}{n+1} -\frac12 \right|
   \right\}
$$
The upper bound follows from the Cauchy-Schwarz inequality  via
\beas 
d_{\mathcal{W}} (P_1, {{P}}_2) &\le& \frac{1}{2(n+2)}  \E | (2 \Theta_2 - 1)  | \\
&\le& \frac{1}{n+2} \left\{  \E  | \Theta_2 - \E [\Theta_2]|+ \left| \E[ \Theta_2] - \frac12 \right| \right\} \\
&\le &  \frac{1}{n+2} \left\{  \sqrt{ {\rm Var} [ \Theta_2] } + \left| \E[ \Theta_2] - \frac12 \right|   \right\} 
\\
&=& \frac{1}{n+2} \left\{  \sqrt{
\frac{\left( y + \frac12\right) \left( n - y + \frac12\right)}{(n+2)(n+1)^2}}
+ \left| \frac{y + \frac12}{n+1} -\frac12 \right|
   \right\}.
\enas 

In contrast to \eqref{betabound}, the Jeffreys prior can achieve a bound of order $O\left(n^{-\frac32}\right)$ if the data $y$ is close to $\frac{n}{2}$.

\subsection{A Poisson model}\label{sec:Bayes5}

The last case we tackle is the Poisson model with data
$x=(x_1,\ldots,x_n)$ from a Poisson distribution with sampling density
$$
f (x;\theta)=e^{-n\theta}\frac{\theta^{\sum_{i=1}^nx_i}}{\prod_{i=1}^nx_i!}.
$$
When $\sum_{i=1}^n x_i \ne 0$, which we shall now assume, then we {{obtain that $P_1$, 
the posterior distribution under a uniform prior, has pdf 
}}
\begin{equation*}
  p_1(\theta;x)\propto \exp(-\theta n)\theta^{\sum_{i=1}^nx_i+1-1}
\end{equation*}
 a
gamma density with parameters $1/n$ and $\sum_{i=1}^nx_i+1$; its Stein
kernel is simply $\tau_1(\theta)=\theta/n$ (see
Example~\ref{sec:exo4}). The general bound \eqref{eq:var} from
Corollary \ref{varcor} becomes
\begin{equation} \label{poisgen} 
 d_{\mathcal{W}} (P_1, P_2) \le  \sup_{\theta \ge 0} \left| {{\pi_0' \left(\theta;  \sum x_i  \right)}}\right| \frac{{\bar{x}}+\frac{1}{n}}{n} , 
\end{equation}
where $\bar{x} = \frac{1}{n} \sum_{i=1}^n x_i \ge \frac{1}{n}$.

Taking for $\theta$ a negative exponential prior $Exp(\lambda)$ with
$\lambda>0$, 
\begin{equation*}
  \pi_0(\theta)=\lambda e^{-\lambda\theta}
\end{equation*}
 over $\R^+$ 
yields that the posterior $P_2$ has density  
$p_2(\theta;x)\propto \exp(-\theta
(n+\lambda))\theta^{\sum_{i=1}^nx_i+1-1}$,
again a gamma density where the first parameter is updated to
$1/(n+\lambda)$. Here, the prior is monotone decreasing, hence we can
exactly calculate the effect of the prior 
to obtain
\begin{eqnarray*}
d_{\mathcal{W}}(P_1, P_2)&=&\E\left[|\log \pi_0(\Theta_2))'|\frac{\Theta_2}{n}\right]\\
&=&\lambda\frac{\E\left[\Theta_2\right]}{n}\\
&=&\lambda\frac{\bar{x}+\frac{1}{n}}{n+\lambda}\\
&=& \frac{\lambda}{n + \lambda} \bar{x} + \frac{\lambda}{n(n+\lambda)}.
\end{eqnarray*}
We note that the exact distance differs from the general bound
\eqref{poisgen} here only through a multiplicative factor
$\frac{n}{\lambda(n+\lambda)}$ (since $\sup_{\theta \ge 0} \left| {{\pi_0' \left(\theta;  \sum x_i  \right)}}\right|=\lambda^2$).  The distance increases with $\bar{x}$ but will
always be at least as large as $\frac{\lambda}{n(n+\lambda)}$. As we
assume that $\bar{x} \ge \frac{1}{n}$, the data-dependent part of the
Wasserstein distance will always be at least as large as the part
which stems solely from the prior.  Finally, from the strong law of
large numbers, $\bar{x}$ will almost surely converge to a constant as
$n \rightarrow \infty$, so that the Wasserstein distance will converge
to 0 almost surely.

\section{Technical results}\label{sec:stein-factors-1}

In this section we first prove the variant of Corollary 2.16 of
\cite{Do12} which we use in our paper. It includes
Lemma~\ref{sec:about-constants-1} as a special case.

\begin{lemma} \label{sec:technical-results}
 
 Let $P$ have a continuous density $p$  with mean $\mu$ and support
$\mathcal{I}$  an interval with
  closure $\bar{\mathcal{I}} = [a, b]$ with
  $-\infty \le a < b \le + \infty$ and let $X \sim P$. Write {{$F_P$}} for the corresponding
  cumulative distribution function. Let $h:\mathcal{I} \rightarrow \R$
  be Lebesgue-almost surely differentiable such that the Fubini
  condition
$$ \int_A \int_B | h'(v)| p(u)dvdu  =  \int_B \int_A | h'(v)| p(u)dudv < \infty  $$ 
is satisfied for all Borel-measurable subsets  $A, B \subset [a,b]$. 
Then
\begin{enumerate} 
\item \label{dobler}
\begin{equation*}
  \left| \int_a^x (h(y)-\E \left[ h({{X}}) \right]) p(y)dy   \right| 
\le   \|h'\|  \int_a^x\left(\mu -  y \right)p(y)dy;
\end{equation*}
\item for 
$g_h = \frac{\mathcal{T}_P^{-1}(h - \E \left[ h({{X}}) \right])}{\tau_P}$ it holds that 
\label{dobler:gen}  
\begin{equation*}
  \| g_h\| \le \| h'\|; 
\end{equation*}
\item \label{dobler:wasser} [Lemma~\ref{sec:about-constants-1}]
in particular, if $\mathcal{H}$ is the set of all Lipschitz-continuous
functions $h:\mathcal{I} \rightarrow \R$ with Lipschitz constant 1,  then 
$$\| g_h\| \le 1$$
for all $h \in \mathcal{H}$.
\end{enumerate} 
\end{lemma}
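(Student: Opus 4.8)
The statement has three parts, and the natural strategy is to prove (1) first and then derive (2) and (3) as consequences. For part (1), the idea is to exploit the fact that $h - \E[h(X)]$ has mean zero under $P$, so that $\int_a^x (h(y) - \E[h(X)])p(y)\,dy = -\int_x^b (h(y) - \E[h(X)])p(y)\,dy$; this gives two representations of the same quantity, and we are free to use whichever is more convenient depending on whether $x$ is to the left or right of $\mu$. The key manipulation is to write $h(y) - \E[h(X)] = \int (h(y) - h(v))p(v)\,dv$ and then express $h(y) - h(v) = \int_v^y h'(t)\,dt$ (valid by absolute continuity of $h$ on compact subintervals, which the Fubini hypothesis underwrites). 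Substituting and applying Fubini (this is exactly where the stated Fubini condition is invoked), one rearranges the double integral so that the inner integration produces a factor $F_P$ or $1 - F_P$; bounding $|h'(t)| \le \|h'\|$ and carefully tracking signs on the region of integration should collapse the expression to $\|h'\| \int_a^x (\mu - y)p(y)\,dy$. One should check that $\int_a^x(\mu - y)p(y)\,dy \ge 0$ for all $x$ (it vanishes at both endpoints and is unimodal), so the right-hand side is a legitimate nonnegative bound.

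Part (2) is then essentially a restatement of part (1) in the notation of the inverse Stein operator and the Stein kernel. By the formula \eqref{eq:1}, $\mathcal{T}_P^{-1}(h - \E[h(X)])(x) = \frac{1}{p(x)}\int_a^x (h(y) - \E[h(X)])p(y)\,dy$, and by the identity $\tau_P(x) = \frac{1}{p(x)}\int_a^x(\mu - y)p(y)\,dy$ recalled in Subsection~\ref{sec:score-function-stein}, dividing the inequality in part (1) by $p(x)$ and then by $\tau_P(x)$ (which is $\ge 0$, and $>0$ on the interior of the support) yields $|g_h(x)| \le \|h'\|$ pointwise, i.e.\ $\|g_h\| \le \|h'\|$. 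One should say a word about points where $\tau_P(x) = 0$: these occur only at (or outside) the endpoints of the support, where by the convention in Definition~\ref{steinpair} the relevant functions vanish, so the bound holds everywhere it is meant to. Part (3) is the immediate specialization to $\mathcal{H} = \mathrm{Lip}(1)$, since every $h \in \mathrm{Lip}(1)$ is Lebesgue-almost-surely differentiable with $\|h'\| \le 1$, and the Fubini condition is automatically satisfied because $|h'| \le 1$ and $p$ is a probability density.

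I expect the main obstacle to be the bookkeeping in part (1): getting the signs right when swapping between the two representations of the integral, handling the two cases $x \le \mu$ and $x \ge \mu$ (or more precisely, arranging the Fubini swap so that the emerging cdf factor has the correct sign on the whole domain of integration), and confirming that all the interchanges of integration are justified by the stated hypothesis rather than by some stronger integrability that we have not assumed. A secondary subtlety is being careful that the argument does not implicitly assume $\mathbf{1} \in \mathcal{F}(P)$ or finiteness of moments beyond $\mu$; the whole point of this lemma, as emphasized after Lemma~\ref{sec:about-constants-1}, is that the bound is insensitive to normalization, so the proof should only use the mean-zero property of $h - \E[h(X)]$ and the defining integral formula for $\tau_P$, nothing else.
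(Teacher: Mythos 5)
Your plan is correct and follows the paper's own argument: the same decomposition $h(y)-\E[h(X)]=\int_a^b\int_u^y h'(v)p(u)\,dv\,du$, the same Fubini interchanges producing the $F_P$ and $1-F_P$ factors, the same sup-norm bound collapsing to $\|h'\|\int_a^x(\mu-y)p(y)\,dy$, with part (2) obtained by dividing by $p\,\tau_P$ and part (3) by Rademacher's theorem. The only cosmetic difference is that the paper avoids any case split on the position of $x$ relative to $\mu$ by integrating the pointwise identity once more against $p$ (a second Fubini) to obtain an exact formula before bounding.
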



\begin{proof}
  We prove the three items separately, closely following \cite{Do12}
  and in particular his Lemma 5.3. 
\begin{enumerate}
\item Let $h:\mathcal{I} \rightarrow \R$ be as detailed in the
  assumptions.  Then, under the sole assumption that Fubini is
  allowed, 
  we can  write for all $a \le y \le b$ 
\begin{align*}
h(y) - \E \left[ h({{X}}) \right] & =   \int_a^b (h(y) - h(u))p(u) du \\
& = \int_a^b \int_u^yh'(v)p(u)dvdu \\
& =  \int_a^y \int_u^yh'(v) p(u) dv du - \int_y^b \int_y^uh'(v) p(u)
dv du \\
& =  \int_a^y \int_a^vh'(v) p(u) du dv - \int_y^b \int_v^bh'(v) p(u) du dv \\
& = \int_a^y F_P(v) h'(v) dv - \int_y^b (1- F_P(v)) h'(v) dv.
\end{align*}
Integrating the above w.r.t. $p$ and again applying Fubini we get
after straightforward simplifications {\color{black}{
\begin{align*}
&   \int_a^x (h(y)-\E \left[ h({{X}}) \right]) p(y)dy \\
& =
-(1-F_P(x)) \int_a^x F_P(s) h'(s)ds - F_P(x) \int_x^b (1-F_P(s)) h'(s) ds
\end{align*}}}
  for each $x \in[a, b]$ from which we readily derive 
  \begin{align*}
&    \left|   \int_a^x (h(y)-\E \left[ h({{X}}) \right]) p(y)dy \right| \\
& \le
    \|h'\| \left( (1-F_P(x)) \int_a^x F_P(s)  ds + F_P(x) \int_x^b (1-F_P(s))
      ds \right).
  \end{align*}
To deal with this last expression we use the identities 
  \begin{align*} 
    \int_a^x F_P(s) ds = x F_P(x) - \int_a^x sp(s)ds 
  \end{align*}
and 
\begin{align*}
\int_x^b
    (1-F_P(s)) ds =-x(1-F_P(x)) + \int_x^b sp(s)ds.
\end{align*}
Straightforward computations yield  the claim. 
\item For Item \ref{dobler:gen}, by definition  
\begin{equation*}
  \mathcal{T}_P^{-1}(h(x) - \E \left[ h({{X}}) \right]) = \frac{1}{p(x)} \int_a^x
  (h(y) - \E \left[ h({{X}}) \right])p(y)dy. 
\end{equation*}
Also, by definition, 
\begin{equation*}
  \tau_P(x) p(x) = \int_a^x (\mu-y) p(y)dy. 
\end{equation*}
Hence 
\begin{equation*} 
  g_h(x) = \frac{\int_a^x \left( h(y) - \E \left[ h({{X}}) \right]
    \right)p(y)dy}{\int_a^x (\mu-y) p(y)dy}
\end{equation*}
which, by Item \ref{dobler}, satisfies 
\begin{equation*}
  \| g_h\| \le \|h'\|  \left| \frac{\int_a^x (\mu-y) p(y)dy}{\int_a^x
      (\mu-y) p(y)dy} \right| = \|h'\|  .
\end{equation*}
\item Item \ref{dobler:wasser} follows directly from Rademacher's
  Theorem for Lipschitz functions which guarantees that they are
  almost surely differentiable, with derivative bounded by 1 if their
  Lipschitz constant is 1.
\end{enumerate}
\end{proof}
We conclude the paper with a proof of Proposition~\ref{prop:easycond},
restated for convenience.
\begin{proposition}  We use the notations of
  Theorem \ref{maintheo}. Suppose that $\pi_0$, $p_1$ and $p_2$ are
  differentiable over their support and that their derivatives are
  integrable. Suppose that 
  \begin{equation*}
    \lim_{x \to a_2, b_2} \pi_0(x) p_1(x) \tau_1(x)  =     \lim_{x \to a_2, b_2} p_2(x) \tau_1(x)  = 0.
  \end{equation*}
Let $\rho_1 = p_1'/p_1$ and suppose also that 
\begin{equation*}
  \pi_0' p_1 \tau_1  =  p_2'  \tau_{1}  - \rho_1 \tau_1 p_2 \in L^1(dx).
\end{equation*}
  Then  Theorem \ref{maintheo} applies.
\end{proposition}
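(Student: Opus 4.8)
The plan is to verify that the stated hypotheses imply the remaining hypotheses of Theorem~\ref{maintheo}: differentiability of $\pi_0$ (immediate), the moment condition $\E|(X_1-\mu_1)\pi_0(X_1)|<\infty$, and conditions \eqref{eq:6}--\eqref{eq:9}. Two facts do the bulk of the work. The first is the pointwise estimate of Lemma~\ref{sec:technical-results}(1) applied to $P_1$: for $h\in\mathrm{Lip}(1)$ (so $\|h'\|\le1$), writing $H(x)=\int_{a_1}^x(h(y)-\E[h(X_1)])p_1(y)\,dy$, one has $|H(x)|\le\int_{a_1}^x(\mu_1-y)p_1(y)\,dy=\tau_1(x)p_1(x)$. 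The second is the product rule for $p_2=\pi_0p_1$, giving $\pi_0'p_1=p_2'-\rho_1p_2$ and, after multiplication by $\tau_1$, the identity in the proposition; thus the hypothesis states exactly that $\pi_0'p_1\tau_1\in L^1(dx)$, which already makes the upper bound $\E[|\pi_0'(X_1)|\tau_1(X_1)]$ of \eqref{eq:10} finite.

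Condition \eqref{eq:9} follows at once: $|\pi_0(x)H(x)|\le\pi_0(x)\tau_1(x)p_1(x)=\tau_1(x)p_2(x)\to0$ as $x\to a_2,b_2$ by the assumed boundary behavior. For the moment condition, put $G=\tau_1p_1\ge0$, so that $G'(x)=(\mu_1-x)p_1(x)$ on $\mathcal{I}_1$; integrating by parts on $[\mu_1,c]$ (when $\mu_1\in\mathcal{I}_2$; on $[a_2,c]$ otherwise, which is easier) gives
\[
  \int_{\mu_1}^c(x-\mu_1)p_2(x)\,dx=\pi_0(\mu_1)G(\mu_1)-\pi_0(c)G(c)+\int_{\mu_1}^c\pi_0'(x)G(x)\,dx .
\]
The left-hand integrand is nonnegative, as $c\to b_2$ the boundary term $\pi_0(c)G(c)=\tau_1(c)p_2(c)\to0$, and $\int_{\mu_1}^{b_2}|\pi_0'|G\le\int_{\mu_1}^{b_2}|\pi_0'|\tau_1p_1<\infty$; hence $\int_{\mu_1}^{b_2}(x-\mu_1)p_2(x)\,dx<\infty$, and the symmetric estimate on $(a_2,\mu_1]$ gives the other tail, so $\E|(X_1-\mu_1)\pi_0(X_1)|=\int_{\mathcal{I}_2}|x-\mu_1|p_2(x)\,dx<\infty$; in particular $X_2$ has a finite mean.

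For \eqref{eq:6}: since $h$ is Lipschitz and $p_1$ continuous, $H\in C^1$ with $H'=(h-\E[h(X_1)])p_1$, and $\pi_0$ is differentiable, so $(\pi_0H)'=\pi_0'H+(h-\E[h(X_1)])p_2$. The first summand is dominated by $|\pi_0'|\tau_1p_1\in L^1(dx)$; for the second, the Lipschitz bound and finiteness of $\mu_1$ give $|h(x)-\E[h(X_1)]|\le|x-\mu_1|+\E|X_1-\mu_1|$, so it is dominated by $(|x-\mu_1|+\E|X_1-\mu_1|)p_2(x)$, which is integrable by the previous step. Therefore $(\pi_0H)'\in L^1(dx)$ and Theorem~\ref{maintheo} applies.

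The step I expect to be the main obstacle is the moment bound, since it is the only place where the boundary hypothesis and the $L^1$ hypothesis on $\pi_0'p_1\tau_1$ must be combined through an integration-by-parts/monotone-convergence argument; it also tacitly uses, when $\mu_1$ is interior to $\mathcal{I}_1$, that $p_1(\mu_1),\tau_1(\mu_1),\pi_0(\mu_1)$ are finite so that the boundary term at $\mu_1$ is harmless, while the degenerate case where $\mu_1$ is an endpoint of $\mathcal{I}_1$ needs only a routine separate remark. The rest is bookkeeping with the two displayed facts.
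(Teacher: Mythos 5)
Your proof is correct and follows essentially the same route as the paper's: the same decomposition $(f_h p_2)'=\pi_0'H+(h-\E[h(X_1)])p_2$ with $H(x)=\int_{a_1}^x(h(y)-\E[h(X_1)])p_1(y)\,dy$, the same use of Lemma~\ref{sec:technical-results} to get $|H(x)|\le \tau_1(x)p_1(x)$ so that \eqref{eq:6} and \eqref{eq:9} reduce to the stated conditions on $\pi_0'p_1\tau_1$ and $\pi_0 p_1\tau_1$, and the same Lipschitz bound on the second summand. The only deviation is that you also derive the moment condition $\E|(X_1-\mu_1)\pi_0(X_1)|<\infty$ (i.e.\ finiteness of the mean of $X_2$) from the boundary and $L^1$ hypotheses by integration by parts, whereas the paper treats the finite means of $X_1$ and $X_2$ as part of the standing assumptions inherited from Theorem~\ref{maintheo}; this extra step is valid and merely makes the proposition slightly more self-contained.
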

\begin{proof} Conditions \eqref{eq:6} and \eqref{eq:9} are equivalent
  to requiring that $f_h \in \mathcal{F}_2$, in other words $(f_hp_2)$
  needs to be differentiable, $(f_hp_2)'$ needs to be integrable with
  integral on $\mathcal{I}_2$ (the support of $p_2$) equal to 0. By
  definition, 
  \begin{equation*}
    f_h(x) p_2(x) = \pi_0(x) \int_{a_1}^x (h(y) - \E[h(X_1)]) p_1(y) dy
  \end{equation*}
is differentiable if $\pi_0$ is differentiable.  Next, differentiating, 
 $$ (f_h p_2 )' (x) = \pi_0' (x) \int_{a_1}^x (h(y) - \E[h(X_1)]) p_1(y) dy + \pi_0(x) (h(x) - \E[h(X_1)]) p_1(x) .$$
 For the second summand, the Lipschitz property of $h$ gives the bound
 $$ |h(x)  - \E[h(X_1)] | \le \int_{a_1}^{b_1} |h(x) - h(y)| p_1(y) dy\le  \int_{a_1}^{b_1} |x - y| p_1(y) dy,$$ so that 
 $$ \int_{a_1}^{b_1} |  \pi_0(x) (h(x) - \E[h(X_1)])  p_1(x) | dx \le 
 \int_{a_1}^{b_1} p_2 (x)  \int_{a_1}^{b_1} |x - y| p_1(y) dy dx \le \E |X_1| + \E |X_2|,
 $$ 
  and the latter expectations are assumed to exist. Hence in order to
  guarantee \eqref{eq:6} it is sufficient to impose that 
  \begin{equation}\label{eq:11}
     \pi_0' (x) \int_{a_1}^x (h(y) - \E[h(X_1)]) p_1(y) dy \in L^1(dx).
  \end{equation}
  We can write
  \begin{align*} \int_{a_1}^x (h(y) - \E[h(X_1)]) p_1(y) dy & =p_1(x)\tau_{1}(x)g_h(x)
  \end{align*}
with 
\begin{equation*}
   g_h(x) = \frac{1}{\tau_1(x)p_1(x)}\int_{a_1}^x (h(y) - \E[h(X_1)]) p_1(y) dy
\end{equation*}
a function which we know from Lemma~\ref{sec:technical-results} to be bounded uniformly
by 1.  Hence \eqref{eq:11} (and therefore \eqref{eq:6})  boils down to a condition on 
$\pi_0'(x) p_1(x)\tau_{1}(x).$ 
Similarly \eqref{eq:9} can be tracked down to a condition on 
$\pi_0(x) p_1(x)\tau_{1}(x)$, and the claim follows. 
\end{proof}


\bibliographystyle{abbrv}

\end{document}